\newtheorem{theorem}{Theorem}[section]
\newtheorem{proposition}[theorem]{Proposition}
\newtheorem{remark}[theorem]{Remark}
\newtheorem{convention}[theorem]{Convention}
\newtheorem{lemma}[theorem]{Lemma}
\newtheorem{corolario}[theorem]{Corollary}
\newcommand{\funmix}{\mathcal{F}}
\newcommand{\fa}{f_{\mathbb{R}}}
\newcommand{\va}{\varphi_{\mathbb{R}}}
\newcommand{\R}{\mathbb{R}}
\newcommand{\Demo}{{\it Proof: }}
\newcommand{\conf}[1]{\mathrm{Conf}(#1)}
\newcommand{\scal}[1]{R(#1)}
\newcommand{\fv}[1]{dv(#1)}
\newcommand{\fvr}[1]{d\sigma(#1)}
\newcommand{\tricc}[2]{R_{#2}(#1)}
\newcommand{\grad}[1]{grad(#1)}
\newcommand{\ghy}{F}
\newcommand{\Cab}{\mathcal{C}_{a,b}}
\newcommand{\mean}[1]{H(#1)}
\newcommand{\opens}{\Omega}
\newcommand{\bifspace}{{\mathcal{M}}}
\newcommand{\II}{\mathrm I\!\mathrm I}
\newcommand{\biffun}[1]{{\mathcal{F}^{#1}}}
\newcommand{\biffunm}[1]{\mathcal{F}^{#1}}
\newcommand{\bifspacem}{\mathcal{M}}
\newcommand{\bord}[1]{\Sigma_{#1}}
\newcommand{\second}[2]{\mathrm I\!\mathrm I_{#2}(#1)}
\newcommand{\cone}{\mathbb{M}^k(M)}
\begin{document}

\title[Rigidity for Yamabe-type problems]{Local rigidity for Yamabe-type problems in warped products}

\author[S. C. Garc\'ia-Mart\'inez]{S. Carolina Garc\'ia-Mart\'inez}
\address{Departamento de Matem\'aticas\hfill\break\indent  Universidad Nacional de Colombia\hfill\break\indent  Bogot\'a, Colombia.}
\email{sacgarciama@unal.edu.co}
\author[J. Herrera]{Jonatan Herrera}
\address{Instituto de Matem\'atica e Estat\'istica\hfill\break\indent Universidade de S\~ao Paulo\hfill\break\indent Rua do Mat\~ao, 1010 - Cidade Universitaria\hfill\break\indent  S\~ao Paulo, Brazil}
\email{jonatanhf@gmail.com}

\begin{abstract}
In this paper we study the local rigidity of metrics defined on a compact manifold $M$ with boundary $\partial M$ and satisfying both constant scalar curvature on $M$ and constant mean curvature in $\partial M$. We present some geometrical hypotheses ensuring local rigidity first for the general Riemannian case, and then, for the warped metric one. These conditions arise from the study of a spectral problem which is not included within the classical problems (Neumann, Steklov,\dots) and that we call ``mixed eigenvalue problem''. Finally, we apply our results for the spatial slice of the Anti-de Sitter spacetimes.\\\\\\
\textbf{2010 Mathematics Subject Classification Numbers.} Primary 53A10, Secundary 58C40, 83C57.\\\\
\textbf{Key Words and Phrases:} local rigidity, Yamabe problem with boundary, Anti-de Sitter Schwarzschild spacetimes, metric variations, warped product, mixed eigenvalue problem.
\end{abstract}
\maketitle
\section{Introduction}
The classical Yamabe problem consists in showing that every Riemannian compact manifold, without boundary, admits a conformally related metric with constant scalar curvature. In dimension two, such a problem follows from the Uniformization Theorem of Riemann Surfaces. For higher dimensions, the problem was formulated by Yamabe in \cite{Yamabe}, where the first steps towards a proof of the existence of a solution were given. After the combined efforts of Trudinger \cite{Trudinger}, Aubin \cite{Aubin} and Schoen \cite{Schoen}, the Yamabe problem was completely solved. It is important to highlight that constant scalar curvature metrics can be characterized variationally as critical points of the Hilbert-Einstein functional in conformal classes. It is known that the minimum of
this functional in a conformal class is unique, see \cite{Anderson}. However, in many cases a rich variety of constant scalar curvature metrics arise as critical points that are not necessarily minimizers, and for this reason, it is very interesting to find conformal classes where the Yamabe problem has multiple solutions.
Among others, bifurcation techniques can be applied in order to obtain multiplicity results.
The existence of multiple solutions for the Yamabe problem in different settings has long been studied in the literature, see for instance \cite{AM,BP, BP2, LPZ2}.

Now, if the compact manifold considered in the Yamabe Problem has a nonempty boundary, several possible boundary conditions can be studied. For instance, from the point of view of conformal geometry, a geometrical condition would have to involve the mean curvature of the boundary. This important observation led Escobar in 1992 (see \cite{E2}) to study the problem of finding smooth metrics with constant scalar curvature and minimal boundary inside a given conformal class. In a natural continuation of the latter, the same author in \cite{E3} analyzed the conformal deformation of a metric to a scalar flat metric with constant mean curvature on the boundary generalizing the famous Riemann Mapping Theorem to higher dimensions. Subsequently, he also obtained results for the \textit{Yamabe problem with boundary} under mixed constraints (see \cite{E}):
\begin{align}\label{c} a \,V + b \,A= 1
\end{align}
where $V$ and $A$ are the volume of manifold and the area of its boundary, respectively, and with $a\geq0$ and $b$ real numbers. More precisely, he studied a problem of existence of conformal metrics with constant scalar curvature $\scal{g}$ in the manifold and constant mean curvature $\mean{g}$ on the boundary, which are related by \[(n-1)b\scal{g}= 2n\mean{g}a.\] The solutions of this problem are critical points of the functional formulated by Gibbons-Hawking and York \cite{GH,York} (which we will refer to as the \emph{GHY-functional}) restricted to the constraint \eqref{c}. The existence of such critical points was proved by Escobar for manifolds of nonpositive type (i.e., the first eigenvalue of the conformal Laplacian with zero boundary condition is nonpositive) and for almost any manifold of positive type if $b$ is sufficiently small.  In \cite{HL1} and \cite{HL2}, Han and Li also showed the existence of solutions when $a > 0$, the manifold is of positive type and either is locally conformally flat with umbilic boundary, or $n\geq5$ and the boundary has a non-umbilic point.

\smallskip

Taking advantage of the variational approach, our aim in this paper is to study the local rigidity of families of metrics defined on a fixed compact manifold $M$ with boundary $\partial M$ and satisfying both constant scalar curvature in $M$ and constant mean curvature on $\partial M$. Roughly speaking, a family $\mathcal F$ of metrics is (locally) rigid if given another metric $g$ solution of the Yamabe problem and sufficiently close to some element of $\mathcal F$, then $g$ belongs to $\mathcal F$.

For our results, we will give conditions to ensure the non-singularity of the second variation of the GHY-functional under the constraint \eqref{c}, which will ensure the rigidity as a consequence of the Implicit Function Theorem (see \cite[Appendix A]{LPZ2}). Such a conditions will be obtained by making lower estimates of the first eigenvalue of a particular kind of spectral problem, which we have called {\em mixed eigenvalue problem} (see Section \ref{mixed} for a general background on this problem).

When we consider warped product spaces with constant scalar curvature, a special family of metrics appears. As it is well known, warped spaces present a natural foliation given by hypersurfaces with constant mean curvature (its fibres). Under the assumption that certain initial fibre of the space is minimal, the foliation is naturally identified with an one-parametric family of metrics satisfying previous conditions, and so, susceptible for our results on rigidity. Such results can be interpreted as local uniqueness of the foliation from a metric viewpoint. 

As an application of the above, we prove that three-dimensional warped products with monotonically increasing warping function are always globally rigid (see Theorem \ref{three-rigid}). This result is specially interesting when the fibre of the warped product is a sphere since it becomes the spatial slice of the Anti-de Sitter Schwarzschild spacetime, a classical relativistic model of a spacetime containing a black hole. Our result allows us to deduce the local uniqueness of the model under metric variations preserving both the causal structure (i.e., its conformal structure) and the boundary of the black hole (see \cite{Wa} for a background on general relativity and the Anti-de Sitter model).

%
\smallskip

This work is organized as follows. In Section \ref{section2}, besides to fixing the notation, we include some preliminaries on bifurcation theory, as well as the criterion that we will use to prove local rigidity. In Section \ref{mixed}, we discuss a general background about the spectrum of the mixed eigenvalue problem, including classical results as the Courant's nodal Theorem and the Rayleigh characterization for eigenvalues (which will be necessary in the sequel). Moreover, we present some results for the particular case of warped spaces.

In Section \ref{section3}, we first introduce the variational setting for the Yamabe problem with boundary, as we described before. Then, we obtain our main results about the stability of the solutions of the Yamabe problem for the general Riemannian case (Theorem \ref{tege}). The case of warped spaces is studied in Section \ref{wm} where first we introduce the one-parametric family of metrics associated to the foliation \eqref{defmetric}, and then we obtain the main result for the warped case (Corollary \ref{rigidwarpedmetric2}). Additionally, in Section \ref{ADS}, we present several examples where our results are applicable, including the case of the spatial fibre of the Anti-de Sitter Schwarzschild model. We have also included in Section \ref{sec5.2} a general procedure to study both the metric rigidity and bifurcation in warped spaces not covered by our general results. Finally, in the Appendix (Section \ref{appendix}), 
we compute the first and second variation of the GHY-functional. This computation has been performed by several authors (see \cite{A} for instance), but we include it here for the sake of completeness.

\section{Preliminaries}\label{section2}

The purpose of this section is to state the basic elements, results and notation that we are going to use in the rest of the paper. Let $(M^n,g)$ be an arbitrary $n$-dimensional Riemannian compact manifold with non-empty boundary $\partial M$ and $n\geq 3$. We will assume that the boundary $\partial M$ is an $(n-1)$-dimensional smooth manifold (i.e., a hypersurface) formed by different connected components. Additionally, we will also assume that such a components are grouped in two disjoint sets $\partial M=\Sigma_1\cup \Sigma_2$ (see for instance Figure \ref{boundary}). As we will see in subsequent sections, our intention here is to group all the components sharing the same (constant) mean curvature, allowing us to work with them as a single component.

\begin{figure}[htbp]
\centering
\ifpdf
  \setlength{\unitlength}{1bp}%
  \begin{picture}(370.90, 179.42)(0,0)
  \put(0,0){\includegraphics{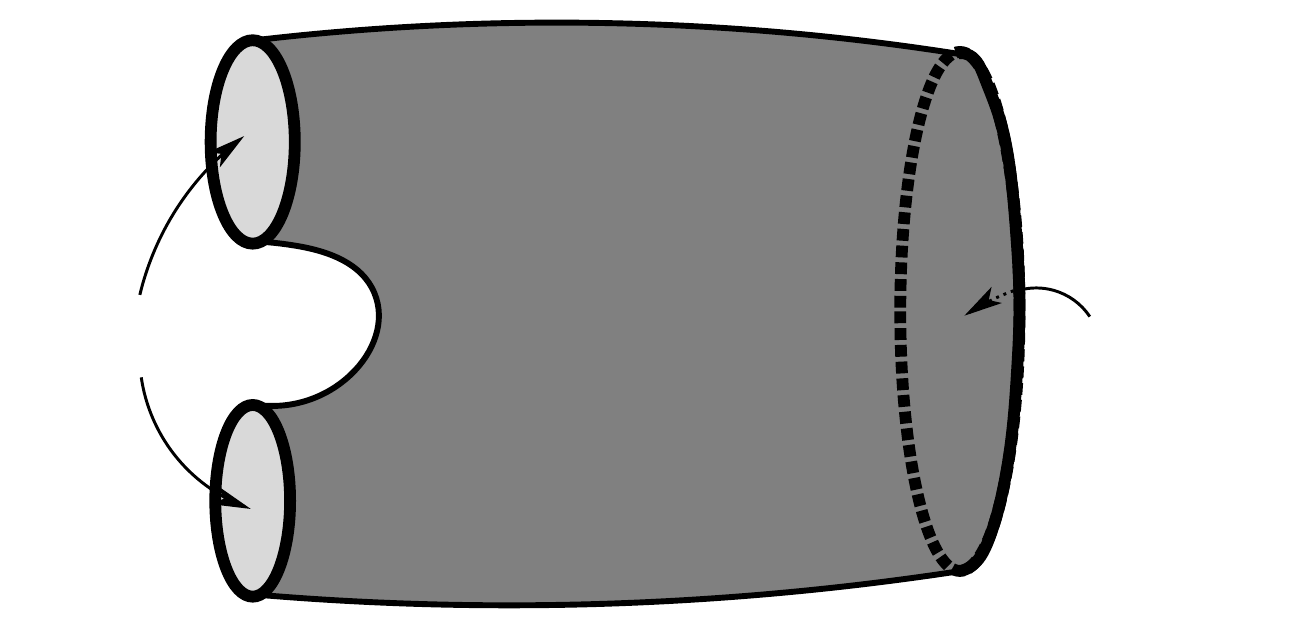}}
  \put(154.72,85.33){\fontsize{24.23}{27.07}\selectfont $M$}
  \put(25.67,78.89){\fontsize{16.54}{18.24}\selectfont $\Sigma_1$}
  \put(315.23,80.84){\fontsize{16.54}{18.24}\selectfont $\Sigma_2$}
  \end{picture}%
\else
  \setlength{\unitlength}{1bp}%
  \begin{picture}(370.90, 179.42)(0,0)
  \put(0,0){\includegraphics{dibujo1}}
  \put(154.72,85.33){\fontsize{14.23}{17.07}\selectfont $M$}
  \put(5.67,82.89){\fontsize{8.54}{10.24}\selectfont $\Sigma_1$}
  \put(315.23,84.84){\fontsize{8.54}{10.24}\selectfont $\Sigma_2$}
  \end{picture}%
\fi
\caption{\label{boundary}Visual interpretation of our setting.}
\end{figure}
For a given system of local coordinates $x_1,\cdots,x_n$ around a point $p$, the metric $g$ and the volume element will be represented as
\[g=g_{ij}\,dx^idx^j,\qquad \fv{g}=\sqrt{\mathrm{det}(g_{ij})}\,dx^1\wedge\cdots\wedge dx^n,\]
with indexes varying from $1\leq i,j\leq n$. When $p$ belongs to $\partial M$, we will assume that the coordinate $x_n$ is normal to the hypersurface $\partial M$ and pointing inward. In particular, the area element restricted to the boundary will take the following form: \[\fvr{g}=\sqrt{\mathrm{det}(g_{\alpha\beta})}\,dx^1\wedge\cdots\wedge dx^{n-1}\]
where $1\leq \alpha,\beta\leq n-1$. \footnote{As a convention, we will always assume that, when we work in coordinates, the indexes $i,j$ will vary between $1,\cdots, n$, while the indexes $\alpha, \beta$ will vary in $1,\cdots, n-1$.} Finally, the scalar curvature in $(M,g)$ will be denoted by $R(g)$, while the mean curvature on each $\Sigma_k$ will be denoted by $H(g_{k})$ for $k=1,2$.

\smallskip

\subsection{Bifurcation theory on metric variations}\label{sec2.1}

Here we introduce some basic framework about bifurcation theory that we will need along the rest of the paper. We refer the reader to \cite{LPZ2,S-W} and the references therein for general background.

As a first step to define the concepts of rigidity and bifurcation, we have to define a natural norm on the space of Riemannian metrics. For this, let us denote by $S^{k}(M)$, with $k\geq 2$, the space of all symmetric $(0,2)$-tensors of class $C^k$ defined on $M$. The space of Riemannian metrics on $M$, denoted here by $\bifspacem$,  is a subspace of $S^{k}(M)$ with the structure of an open cone and tangent space $T_g\bifspacem$ naturally identifiable with the entire $S^{k}(M)$. Now, consider an auxiliary Riemannian metric $g_A$ in $M$ defining both a connection $\nabla_A$ and an inner product $\left\langle\left\langle\cdot,\cdot\right\rangle\right\rangle_{A}$ on the space $S^k(M)$. Then, we define the norm $||\cdot||_{C^{k}}$  given by

%

\[
||T||_{C^{k}}={\rm max}_{j=1,\dots,k}\left[{\rm max}_{p\in M} ||\nabla^{(j)}_{A}T(p)||_{A}\right].
\]

For $g\in \bifspacem$, let us denote by\footnote{Here, $\mathcal{H} ^1(M)$ is the Sobolev space of $L^2$ functions on $M$ with first derivatives in $L^2$.}  

\begin{equation}\label{eq:defconf}
\conf g=\left\{fg: f\in \mathcal{H}^1(M)\quad \textrm{and} \quad f>0 \right\}
\end{equation} 

\noindent the space of $\mathcal{H}^1(M)$-conformal metrics of $g$, which is clearly identified as an open subset of $\mathcal{H}^1(M)$. We will consider in $\conf g$ the differential structure induced by $\mathcal{H}^1(M)$.  


\smallskip

Now, let us consider an one-parameter family of metrics $\{g_{\lambda}\}_{\lambda\in I}\subset \bifspacem$, where $I$ is an arbitrary open interval on $\mathbb{R}$. We will assume that, for all $\lambda$, the metric $g_{\lambda}$ has:
\begin{itemize}
\item[(a)] constant scalar curvature $R(g_{\lambda})$ in $M$,
\item[(b)] constantly zero mean curvature on $\Sigma_1$ (i.e., $\Sigma_1$ is a minimal hypersurface) and
\item[(c)] constant mean curvature (CMC for short) $H(g_2)$ on $\Sigma_2$.
\end{itemize}
In this setting, $\lambda_{*}$ is said to be a {\em point of bifurcation} if there exist a sequence $\{\lambda_n\}_n\subset I$ and a sequence $\{g_n\}_n\subset \bifspacem$ satisfying:

\begin{itemize}
\item[(1)] $\lim\limits_{n\rightarrow \infty}\lambda_n=\lambda_*$ and $\lim\limits_{n\rightarrow \infty}g_n=g_{\lambda_*}$ (the latter with the topology induced by $||\cdot||_{C^{k}}$),
\item[(2)] For all $n$, $g_n$ determines constant scalar curvature in $M$; and makes $\Sigma_1$ a minimal hypersurface and $\Sigma_2$ a CMC-hypersurface. Moreover, the scalar curvature in $M$ and the constant mean curvature on $\Sigma_2$ are $R(g_{\lambda_n})$ and $H((g_{\lambda_n})_2)$, respectively,
\item[(3)] For all $n$, $g_n$ belongs to the $\mathcal{H}^1(M)$-conformal class of $g_{\lambda_n}$, but $g_n \neq g_{\lambda_n}$.
\end{itemize}
If $\lambda_*$ is \emph{not} a bifurcation point, we will say that the family $\{g_{\lambda}\}_{\lambda}$ is {\em locally rigid} at $\lambda_*$.

\smallskip

In the literature, we can find several criteria for the existence of bifurcation points as well as for (local) rigidity. In general, such results require a variational problem whose critical points are the objects of interest. For our particular case, given a family $\{g_{\lambda}\}_{\lambda}$ of metrics, we need a path of $C^k$-functionals (with $k\geq 2$) $\biffun{\lambda}:\conf {g_{\lambda}}\rightarrow \R$ whose critical points are metrics $g$ in the conformal class of $g_\lambda$ with constant scalar curvature $R(g)=R(g_{\lambda})$ for a fixed $\lambda\in I$; and making $\Sigma_1$ minimal and $\Sigma_2$ of constant mean curvature with $H(g_2)=H((g_{\lambda})_2)$. Then, if the metric $g_{\lambda_*}$ is a non-degenerate critical point for some $\lambda_*$, the fiber bundle version of the Implicit Function Theorem ensures the local rigidity on $\lambda_*$ (see \cite[Appendix]{LPZ2} for details). For bifurcation, in addition to the degeneracy of $g_{\lambda_*}$, we also require a variation in the associated Morse index to the critical points in a close neighbourhood of $\lambda_*$. Namely, let us denote by $i(\mathcal{F},g)$ the Morse index of $g$, that is, the dimension of the maximal subspace of the tangent space $T_g\bifspacem$ where the second variation $\delta^2(\mathcal{F})_g$ is negative definite. Then, $\lambda_*$ is a bifurcation point if for any $\lambda_1,\lambda_2$ close enough to $\lambda_*$ with $\lambda_1<\lambda_*<\lambda_2$, $i(\mathcal{F}^{\lambda_1},g_{\lambda_1})\neq i(\mathcal{F}^{\lambda_2},g_{\lambda_2})$ (see \cite[Theorem A.2]{LPZ2} for details).
	
In this work, we will focus our attention on local rigidity, even so our studies allow us to do some analysis involving bifurcation points (see Section \ref{sec5.2}). In particular, the desired non-degeneracy will follow by showing that all the eigenvalues of the associated Jacobi operator (which is diagonalizable under our conditions) are positive. In this case, the spectrum of the Jacobi operator lead us to a {\em mixed} eigenvalue problem.


\section{Spectrum of the mixed eigenvalue problem}\label{mixed}
Our aim in this section is to give a general background of the so-called mixed eigenvalue problem, giving the basic properties that we will need through this paper. This kind of problem cannot be consider a Neumann problem nor a Steklov one but a mix of them, can be described in the following way

\begin{equation}\label{mixequation}
\left\{\begin{array}{rlrr}
\triangle_g f = \left(G + \overline{\beta}\right) f & & \hbox{in $M$,}\\\\
-\dfrac{\partial}{\partial n}f=\left(J + \overline{\beta}\right)f & & \hbox{on $\partial M$}
\end{array}\right.
\end{equation}
where $\triangle_g$ is the Laplace-Beltrami operator with nonnegative spectrum\footnote{i.e., $\Delta_g=-\mathrm{div}_g(\mathrm{grad})$.}; $G:M\rightarrow \R$ and $J:\partial M\rightarrow \R$ are functions; $\partial/\partial n$ denotes the inward normal derivative; and $f\in \mathcal{H}^1(M)$.

\smallskip

To what extent are the classical procedures for the study of, say, Dirichlet problems applicable (or, at least, adaptable) to the mixed one? As a first step to answer this question, it is essential to describe the mixed problems variationally. Let us define two bilinear forms $\mathcal{D}$ and $\mathcal{E}$ in the following way:

\begin{align*}
{\mathcal D}(\varphi,\psi)&=\int_{M}\big[g\left(\grad \varphi,\grad \psi\right)-G\varphi\psi\big] \fv{g}- \int_{\partial M}J\varphi\psi \,\fvr{g}\\
{\mathcal E}(\varphi,\psi)&=\int_{M}\varphi\psi \fv{g} + \int_{\partial M} \varphi\psi \fvr{g}.
\end{align*}
Then, for a fixed $\overline{\beta}\in\R$, we define the functional

\[\funmix(\varphi)=\mathcal{D}(\varphi)-\overline{\beta}\mathcal{E}(\varphi)  \]
where $\mathcal{D}(\varphi):=\mathcal{D}(\varphi,\varphi)$ and $\mathcal{E}(\varphi):=\mathcal{E}(\varphi,\varphi)$;
and observe that the first variation of such a functional becomes:
\[
\delta\funmix_{\varphi}(\psi)=2\left({\mathcal D}(\varphi,\psi)-\overline{\beta} {\mathcal E}(\varphi,\psi)\right).
\]
Now, the first Green identity leads us to:

{\small \begin{align*}
{\mathcal D}(\varphi,\psi)-\overline{\beta} {\mathcal E}(\varphi,\psi)= & \int_{M}\psi\left(\triangle_g\varphi-G\varphi-\overline{\beta}\varphi\right)\fv{g} +\int_{\partial M}\psi\left(-\partial_n \varphi -J\varphi-\overline{\beta}\varphi\right)\fvr{g}.
\end{align*}}

Hence, $\varphi$ is a critical point for the functional $\funmix$, if and only if it is a solution for the mixed eigenvalue problem \eqref{mixequation}. By using this variational approach, we can re-obtain some classical and well-known results for the mixed case. For instance, the Courant's Nodal Theorem follows directly (see \cite[Page 452]{Courant}) as well as the
%
classical characterization for the eigenvalues was obtained by Courant \cite{Courant} and Rayleigh \cite{Rayleigh}. In fact, it follows that the eigenvalues are determined by a sequence $\{\overline{\beta}_i\}_{i\in \mathbb{N}}$, repeated according to their multiplicity, and such that ${\rm lim}_{i\rightarrow\infty}\,\overline{\beta}_i=\infty$. Moreover,

\begin{align*}\label{ray}
\overline{\beta}_n=\min_{\varphi\in \{\varphi_1,\dots,\varphi_{n-1}\}^\bot} \frac{\mathcal{D}(\varphi)}{\mathcal{E}(\varphi)}
\end{align*}

\noindent where each $\varphi_i$ (with $1\leq i\leq n-1$) is the eigenfunction associated to $\overline{\beta}_i$ and

\[
\{\varphi_1,\dots,\varphi_{n-1}\}^\bot=\{\varphi\in \mathcal{H}^1(M):\mathcal{E}(\varphi,\varphi_i) =0,\; \forall i=1,\dots,n-1\}.
\]
For convenience, when $n=1$ the minimum is taken on the whole $\mathcal{H}^1(M)$.
\smallskip

Now, observe that the first Green identity lead us to:  

\[\def\arraystretch{2.2}\begin{array}{r>{\displaystyle}l}
\displaystyle \int_{M}|\grad{\varphi}|_g^2 \,\fv{g}= &\int_{M}\varphi\Delta_g{\varphi}\,\fv{g}- \int_{\partial M} \varphi \partial_n \varphi \fvr{g}\\
= & \int_{M}\left(\overline{\beta}+G\right)\varphi^2 \fv{g} + \int_{\partial M}\left(\overline{\beta}+J\right)\varphi^2 \fvr{g}\\
= & \overline{\beta}\left(\int_{M}\varphi^2 \fv{g} + \int_{\partial M}\varphi^2 \fvr{g}\right) \\
& + \int_{M}G\varphi^2 \fv{g} + \int_{\partial M} J\varphi^2 \fvr{g}.
\end{array}
\]
Then, if we assume the following normalization
\begin{equation}\label{auxeq1}\int_{M}\varphi^2 \fv{g} + \int_{\partial M}\varphi^2 \fvr{g} =
 {\mathcal {E}(\varphi,\varphi)}= 1,
\end{equation}
any eigenvalue $\overline{\beta}$ should satisfy
\begin{equation}\label{auxeq2}
\overline{\beta}= 
\int_{M}|\grad{\varphi}|_g^2 \,\fv{g}-\int_{M}G\varphi^2 \fv{g} - \int_{\partial M}J \varphi^2 \fvr{g}.
\end{equation}

In particular, we can obtain easily the following simple lower estimate for the eigenvalues





\begin{proposition}\label{lowboundg} The eigenvalues $\overline{\beta}$ satisfies
\[\overline{\beta}\geq -\left(G^+ +J^+\right).
\]
where $G^+=\max_M \{0,G\}$ and $J^+=\max_{\partial M}\{0,J\}$. 
\end{proposition}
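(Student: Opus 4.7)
The plan is to read off the bound directly from the Rayleigh-type identity \eqref{auxeq2}, applied to a normalized eigenfunction $\varphi$ satisfying \eqref{auxeq1}. Since any eigenfunction can be rescaled to satisfy that normalization without affecting the eigenvalue, it suffices to work in this regime. First, I would discard the Dirichlet term $\int_M|\grad{\varphi}|_g^2\fv{g}$, which is manifestly nonnegative and hence only strengthens the inequality, leaving
\[
\overline{\beta}\ge -\int_M G\varphi^2\fv{g}-\int_{\partial M}J\varphi^2\fvr{g}.
\]

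Next, using the pointwise inequalities $G\le G^+$ on $M$ and $J\le J^+$ on $\partial M$ (where $G^+,J^+\ge 0$ are constants by definition), I would pull these nonnegative constants outside the respective integrals, obtaining
\[
\overline{\beta}\ge -G^+\int_M\varphi^2\fv{g}-J^+\int_{\partial M}\varphi^2\fvr{g}.
\]
Finally, the normalization \eqref{auxeq1} forces each of the two remaining nonnegative integrals to lie in $[0,1]$; bounding each of them by $1$ independently yields exactly $\overline{\beta}\ge -(G^+ + J^+)$, as desired.

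The argument is essentially algebraic, and I do not anticipate any real obstacle. The only point requiring care is ensuring that identity \eqref{auxeq2} is indeed available for every eigenvalue of the mixed problem, but this is precisely the content of the Green-identity computation immediately preceding the proposition, which holds for an arbitrary eigenfunction. The resulting estimate naturally splits into an interior contribution from $G$ and a boundary contribution from $J$, reflecting the mixed character of the problem; a sharper bound of the form $\overline{\beta}\ge -\max(G^+,J^+)$ could in principle be obtained by exploiting the combined normalization $\int_M\varphi^2\fv{g}+\int_{\partial M}\varphi^2\fvr{g}=1$ more carefully, but the additive form stated is cleaner and suffices for the later applications.
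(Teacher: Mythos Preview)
Your proposal is correct and follows essentially the same approach as the paper: start from the normalized identity \eqref{auxeq2}, drop the nonnegative gradient term, bound $G\le G^+$ and $J\le J^+$ pointwise, and then use \eqref{auxeq1} to bound each remaining integral by $1$. Your additional remark about the sharper bound $-\max(G^+,J^+)$ is a valid observation that the paper does not make.
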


\Demo  First, let us recall that $G$ (and analogously $J$) can be expressed as $G=G^+-G^-$ where $G^+=\max_M \{0,G\}$ and $G^-=\max_M \{0,-G\}$. Thus, from \eqref{auxeq2} we have that

\[\def\arraystretch{2.2}\begin{array}{rl}
\overline{\beta}=&\displaystyle \int_{M}|\grad{\varphi}|_g^2\, \fv{g}-\displaystyle\int_{M}G\varphi^2 \fv{g} - \displaystyle\int_{\partial M}J \varphi^2 \fvr{g}\geq\\ \geq & -G^+\displaystyle\int_{M}\varphi^2 \fv{g}-J^+\displaystyle\int_{\partial M} \varphi^2 \fvr{g}. \end{array}
\]
Using now the normalization in (\ref{auxeq1}), we have that both previous integrals are less than or equal to $1$, so we get
\[
\overline{\beta}\geq -\left(G^++J^+\right).
\]
\qed

Previous lower bound of the eigenvalues will be clearly insufficient to obtain rigidity results in the forthcoming sections. However, for numerical approaches, it determines a start-point for algorithms looking for the first eigenvalue.

\subsection{The mixed eigenvalue problem for warped metric spaces}\label{mixedwarpedsection}
In this section we will focus our attention on the warped product spaces, that is, an $n$-dimensional Riemannian space $(M,g)$ where
\[
M=(r_1,r_2)\times P, \qquad g=dr^2+\alpha^2(r)g^P,
\]

\noindent $(P,g^P)$ is an $(n-1)$-dimensional closed Riemannian manifold and $\alpha$ is a $\mathcal{C}^k$ positive function with $k\geq 2$ 
. Along this section, we will also assume that the previous functions $G$ and $J$ only depends on the parameter $r$. With these assumptions, the mixed eigenvalue problem can be reduced to a Sturm-Liouville-type problem with boundary conditions. In fact,
 when $g$ is a warped metric, the Laplace-Beltrami operator splits as

\[
\triangle_g \varphi=-\frac{1}{\alpha^{n-1}}\partial_r\left(\alpha^{n-1}\partial_r\varphi\right) + \dfrac{1}{\alpha^{2}}\triangle_P\,\varphi
\]

\noindent where $\triangle_{P}$ denotes the Laplace-Beltrami operator on $(P,g^P)$. Therefore, the first equation in (\ref{mixequation}) becomes

\begin{eqnarray}\label{ep}
-\dfrac{1}{\alpha^{n-1}}\partial_r\left(\alpha^{n-1}\partial_r\varphi\right) + \dfrac{1}{\alpha^{2}}\triangle_P\,\varphi=\left(G+\overline{\beta}\right)\varphi.
\end{eqnarray}
Next, by separation of variables, we assume that a solution $\varphi:M\rightarrow \R$ of the above problem can be split as,
\[\varphi(r,x)=\varphi_{\R}(r)\varphi_P(x)\] where $\va:(r_1,r_2)\rightarrow \R$ and $\varphi_P:P\rightarrow \R$. Moreover, if we assume that $\varphi_P\equiv \varphi_P^i$ is a non-zero eigenfunction for $\triangle_{P}$ associated to an eigenvalue $\beta_i$ then \eqref{ep} can be written as follows
\[
-\frac{1}{\alpha^{n-1}}\partial_r\left(\alpha^{n-1}\partial_r\va\right)+\frac{1}{\alpha^{2}}\beta_i\va= \left(G + \overline{\beta}\right)\va
\]

\noindent which is a Sturm-Liouville equation on the interval $(r_1,r_2)$. Hence, joining previous equation with the initial conditions on (\ref{mixequation}) we obtain, for each eigenvalue $\beta_i$ of $\triangle_{P}$, the following problem\footnote{By notation, the dot will denote derivative of a real function.}:

\begin{equation}
\begin{array}{rl}
-\dfrac{1}{\alpha^{n-1}}\partial_r\left(\alpha^{n-1}\partial_r\va\right)+\dfrac{1}{\alpha^{2}}\beta_i\va=& \left(G + \overline{\beta}\right)\va,\\\\
-\dot{\va}(r_1)=&\left(J(r_1)+\overline{\beta}\right)\va(r_1),\\\\
\dot{\va}(r_2)=&\left(J(r_2)+\overline{\beta}\right)\va(r_2).
\end{array}
\end{equation}
It is well-known that for each $i$, these problems admit a sequence of eigenvalues $\{\overline{\beta}^i_j\}_{j\in \mathbb{N}}$ with

\[
\overline{\beta}_1^i<\overline{\beta}_2^i\leq \cdots \leq \overline{\beta}_j^i\leq\cdots
\]

\noindent Therefore, if we denote by $(\va)^i_j$ the eigenfunction associated to $\overline{\beta}_j^i$, we obtain that $\varphi^i_j=(\va)^{i}_j(\varphi_P)_i$ is an eigenfunction for the mixed eigenvalue problem (\ref{mixequation}) with eigenvalue $\overline{\beta}^i_j$. Moreover, from previous expression of the eigenfunctions and the dimension of $\mathcal{H}^1((r_1,r_2)\times P)$, we deduce that all the eigenfunctions are obtained by this process, i.e., we have proved
\begin{theorem}\label{teomixedproblemw}
Let $\varphi$ be an eigenfunction of \eqref{mixequation} with associated eigenvalue $\overline{\beta}$. Then $\varphi$ can be written as the product of two functions $\va:(r_1,r_2)\rightarrow \R$ and $\varphi_P:P\rightarrow \R$ where $\varphi_P$ is an eigenfunction of $\triangle_{P}$ with associated eigenvalue $\beta$, and $\va$ is a solution of the following problem:
\begin{equation}\label{mixproblemwarped}
\begin{array}{rl}
-\dfrac{1}{\alpha^{n-1}}\partial_r\left(\alpha^{n-1}\partial_r\va\right)+\dfrac{1}{\alpha^{2}}\beta\va=& \left(G + \overline{\beta}\right)\va,\\\\
-\dot{\va}(r_1)=&\left(J(r_1)+\overline{\beta}\right)\va(r_1),\\\\
\dot{\va}(r_2)=&\left(J(r_2)+\overline{\beta}\right)\va(r_2).
\end{array}
\end{equation}

\end{theorem}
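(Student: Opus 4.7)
The plan is to establish the theorem by a separation-of-variables argument, leveraging three features of the setup: the splitting of the warped-product Laplacian recalled above, the fact that $G$ and $J$ depend only on $r$, and the existence of a complete $L^2(P)$-orthonormal basis $\{(\varphi_P)_i\}_{i\in\N}$ of eigenfunctions of $\triangle_P$ (with eigenvalues $\beta_i$), which is available since $(P,g^P)$ is closed. The preceding computation already verifies the converse direction, namely that a pure product $\va\cdot\varphi_P$, with $\va$ solving \eqref{mixproblemwarped} and $\varphi_P$ an eigenfunction of $\triangle_P$ with eigenvalue $\beta$, gives an eigenfunction of \eqref{mixequation} with eigenvalue $\overline\beta$. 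What remains is the completeness claim: every eigenfunction of \eqref{mixequation} decomposes into such products, so that the corresponding eigenspace admits a basis of products of the stated form.

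Given an eigenfunction $\varphi\in\mathcal{H}^1(M)$ of \eqref{mixequation} with eigenvalue $\overline\beta$, I would expand it fibrewise as
\[
\varphi(r,x)=\sum_{i\in\N}\va^i(r)\,(\varphi_P)_i(x),\qquad \va^i(r)=\int_P \varphi(r,\cdot)(\varphi_P)_i\,\fv{g^P}.
\]
Inserting this expansion into the interior equation of \eqref{mixequation}, using the splitting of $\triangle_g$ together with $\triangle_P(\varphi_P)_i=\beta_i(\varphi_P)_i$, and then projecting onto $(\varphi_P)_i$, yields
\[
-\frac{1}{\alpha^{n-1}}\partial_r\!\bigl(\alpha^{n-1}\dot{\va}^i\bigr)+\frac{\beta_i}{\alpha^{2}}\va^i=(G+\overline\beta)\va^i.
\]
Because $J$ depends only on $r$, the same projection applied to the boundary condition at each component $\{r_k\}\times P$ produces exactly the Robin-type endpoint conditions in \eqref{mixproblemwarped}. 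Hence every nontrivial mode $\varphi_i:=\va^i\cdot(\varphi_P)_i$ is itself a product eigenfunction of \eqref{mixequation} with eigenvalue $\overline\beta$, and only the indices $i$ for which $\overline\beta$ belongs to the discrete spectrum $\{\overline\beta^i_j\}_j$ of \eqref{mixproblemwarped} contribute a nonzero $\va^i$. This yields the desired basis of product eigenfunctions spanning the eigenspace of $\overline\beta$.

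The main obstacle is not the algebraic separation but the justification that the Fourier series in the $(\varphi_P)_i$ may be differentiated termwise in $r$ and evaluated at $r=r_1,r_2$. For this one upgrades the $\mathcal{H}^1$ regularity of $\varphi$ via standard elliptic regularity for the mixed (Robin-type) problem: since $\alpha\in C^k$ and $G,J$ are smooth enough, eigenfunctions are $C^2$ in the interior and $C^1$ up to $\partial M$, which legitimates both the projection onto $(\varphi_P)_i$ and the termwise passage of $\partial_r$ and the trace map through the series, securing simultaneously the Sturm--Liouville ODE and its endpoint conditions on each $\va^i$.
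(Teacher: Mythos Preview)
Your argument is correct and follows the same separation-of-variables idea as the paper, but you execute the completeness step differently. The paper builds the family of product eigenfunctions $\varphi^i_j=(\va)^i_j\,(\varphi_P)_i$ first and then asserts, in one line, that ``from the previous expression of the eigenfunctions and the dimension of $\mathcal{H}^1((r_1,r_2)\times P)$'' this family exhausts all eigenfunctions; in other words, it argues by completeness of the product system in the ambient Hilbert space. You instead start from an arbitrary eigenfunction, expand it fibrewise against the $(\varphi_P)_i$, and project the PDE and the boundary conditions onto each mode to obtain the Sturm--Liouville problem for every $\va^i$. The two arguments are dual: the paper's is shorter but leaves the analytic justification implicit, while yours is more explicit and, in particular, confronts the regularity needed to pass $\partial_r$ and the boundary trace through the Fourier series. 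Your reading of the conclusion as ``the eigenspace of $\overline\beta$ has a basis of products'' is also the right way to understand the theorem, since a sum of products sharing the same $\overline\beta$ need not itself be a single product.
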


\smallskip

The remainder of the section is devoted to obtain some additional information about the eigenvalues of this mixed problem on the warped case. As before, let us denote by $\{\beta_i\}$ the ordered sequence of eigenvalues of the Laplacian $\triangle_{P}$ and by $\overline{\beta}_1^i$ the first eigenvalue of the Sturm-Liouville problem \eqref{mixproblemwarped} with $\beta=\beta_i$. Then, the Rayleigh quotient applied to Problem \eqref{mixproblemwarped} reads:

\begin{equation}\label{rquotientwarped}
\overline{\beta}_{1}^{i}=\min_{\va\in \mathcal{H}^1(r_1,r_2)} \frac{-\alpha^{n-1}\va\dot{\va}\Big|_{r_1}^{r_2} + \displaystyle\int_{r_1}^{r_2}\alpha^{n-1}\left[(\dot{\va})^2+(\alpha^{-2}\beta_i-G)\va^2\right]dr}{\displaystyle\int_{r_1}^{r_2}\alpha^{n-1}\va^2\,dr}.
\end{equation}
With this characterization, we are able to prove the following two results:

\begin{lemma}\label{ineeigenvalues}
For all $i\in\mathbb{N}$,
\[
\overline{\beta}_{1}^{i}\leq\overline{\beta}_{1}^{i+1}.
\]
\end{lemma}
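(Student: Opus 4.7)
The plan is to leverage the variational characterization \eqref{rquotientwarped}, together with the fact that $\{\beta_i\}$ is the ordered sequence of eigenvalues of $\triangle_P$, so that $\beta_i\leq\beta_{i+1}$. The key observation is that $\beta_i$ enters the right-hand side of \eqref{rquotientwarped} only through the coefficient of $\va^2$ in the bulk integrand of the numerator, namely $\alpha^{-2}(r)\beta_i$; the boundary term $-\alpha^{n-1}\va\dot{\va}\big|_{r_1}^{r_2}$, the remaining pieces of the numerator, and the whole denominator are all independent of $i$. Since $\alpha$ is positive, the Rayleigh functional is then a pointwise non-decreasing function of $\beta_i$ at every fixed non-trivial test function, and monotonicity of the infima should follow immediately.

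Concretely, I would denote by $Q_i(\va)$ the Rayleigh quotient in \eqref{rquotientwarped} with $\beta=\beta_i$, and choose a minimizer $u\in\mathcal{H}^1(r_1,r_2)$ of $Q_{i+1}$ (i.e.\ an eigenfunction associated with $\overline{\beta}_1^{i+1}$; its existence is guaranteed by standard Sturm--Liouville theory, since the problem has discrete spectrum bounded below). Cancelling equal terms in the difference $Q_{i+1}(u)-Q_i(u)$ yields
\begin{equation*}
Q_{i+1}(u)-Q_i(u)=(\beta_{i+1}-\beta_i)\,\frac{\displaystyle\int_{r_1}^{r_2}\alpha^{n-3}u^{2}\,dr}{\displaystyle\int_{r_1}^{r_2}\alpha^{n-1}u^{2}\,dr}\geq 0,
\end{equation*}
since $\alpha>0$, $u\not\equiv 0$ and $\beta_{i+1}\geq \beta_i$. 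Finally, applying the variational characterization of $\overline{\beta}_1^i$ to the admissible test function $u$ produces the chain $\overline{\beta}_1^i\leq Q_i(u)\leq Q_{i+1}(u)=\overline{\beta}_1^{i+1}$, which is the inequality we want.

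No serious obstacle is anticipated: the argument is essentially a standard monotonicity principle for Rayleigh quotients. The only care to take is to check that the boundary term $-\alpha^{n-1}\va\dot{\va}\big|_{r_1}^{r_2}$ in \eqref{rquotientwarped} does not secretly depend on $\beta_i$ (which is immediate from its form) and that the admissible class of test functions is the same $\mathcal{H}^1(r_1,r_2)$ for both indices; both facts are apparent from the statement of Theorem \ref{teomixedproblemw}.
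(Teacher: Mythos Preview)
Your argument is correct and follows essentially the same route as the paper's proof: both use that the Rayleigh quotient \eqref{rquotientwarped} depends on $i$ only through the term $\alpha^{-2}\beta_i\,\va^2$, hence is pointwise non-decreasing in $\beta_i$ for each fixed test function, and then pass to the minimum. The paper phrases this as monotonicity of the infimum directly, while you make it explicit by plugging the minimizer $u$ of $Q_{i+1}$ into $Q_i$; the content is the same.
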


\Demo For a fixed function $\va$, the function on the right of (\ref{rquotientwarped}) is non-decreasing respect to $\beta_i$. Therefore,

\[\def\arraystretch{2.2}
\begin{array}{r>{\displaystyle}l}
\overline{\beta}_1^i\!\!&=  \min_{\va\in \mathcal{H}^1(r_1,r_2)} \frac{-\alpha^{n-1}\va\dot{\va}\Big|_{r_1}^{r_2} + \displaystyle\int_{r_1}^{r_2}\alpha^{n-1}\left[(\dot{\va})^2+(\alpha^{-2}\beta_i-G)\va^2\right]dr}{\displaystyle\int_{r_1}^{r_2}\alpha^{n-1}\va^2\,dr} \leq\\[.3cm] &\!\!\!\! \min_{\va\in \mathcal{H}^1(r_1,r_2)}\!\! \frac{-\alpha^{n-1}\va\dot{\va}\Big|_{r_1}^{r_2} + \displaystyle\int_{r_1}^{r_2}\alpha^{n-1}\left[(\dot{\va})^2+(\alpha^{-2}\beta_{i+1}-G)\va^2\right]dr}{\displaystyle\int_{r_1}^{r_2}\alpha^{n-1}\va^2\,dr} =\overline{\beta}_1^{i+1}.
\end{array}
\]
\qed

\smallskip

\begin{proposition}\label{conditionwarpedmixed}
Consider the mixed eigenvalue problem \eqref{mixequation} for a warped metric. If, for some $i_0\in \mathbb{N}$, it simultaneously satisfies $J\leq 0$ and $\alpha^{-2}\beta_{i_0}\geq G$ for all $r\in (r_1,r_2)$, then all the eigenvalues of the form $\overline{\beta}^i_j$ with $i\geq i_0$ are non-negative. If, in addition, at least one of the two inequalities is strict, then such eigenvalues are positive. 
\end{proposition}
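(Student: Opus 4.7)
The plan is to reduce everything to a lower bound on the first eigenvalue $\overline{\beta}_1^{i_0}$ of the Sturm--Liouville problem \eqref{mixproblemwarped} with $\beta=\beta_{i_0}$. Within each such problem (fixed $i$), $\overline{\beta}_1^i$ is the smallest element of the sequence $\{\overline{\beta}_j^i\}_j$, and Lemma~\ref{ineeigenvalues} gives $\overline{\beta}_1^{i_0}\leq \overline{\beta}_1^i$ for every $i\geq i_0$; combining these, $\overline{\beta}_j^i\geq\overline{\beta}_1^{i_0}$ whenever $i\geq i_0$ and $j\geq 1$. Hence it suffices to show $\overline{\beta}_1^{i_0}\geq 0$, with strict inequality under the additional assumption.

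To estimate $\overline{\beta}_1^{i_0}$, I would fix an associated eigenfunction $\va$, multiply the first equation of \eqref{mixproblemwarped} by $\alpha^{n-1}\va$, integrate over $(r_1,r_2)$, and integrate by parts in the second-order term. Substituting the Robin boundary conditions in \eqref{mixproblemwarped} and collecting all $\overline{\beta}_1^{i_0}$-dependent contributions on the left yields an identity of the form $\overline{\beta}_1^{i_0}\,A(\va)=B(\va)$, where
\[
A(\va)=\int_{r_1}^{r_2}\alpha^{n-1}\va^2\,dr + \alpha(r_1)^{n-1}\va(r_1)^2 + \alpha(r_2)^{n-1}\va(r_2)^2,
\]
\[
B(\va)=\int_{r_1}^{r_2}\alpha^{n-1}\bigl[\dot{\va}^2+(\alpha^{-2}\beta_{i_0}-G)\va^2\bigr]dr - \sum_{k=1,2}\alpha(r_k)^{n-1}J(r_k)\va(r_k)^2.
\]
This is essentially the content of the Rayleigh characterization \eqref{rquotientwarped} after the boundary conditions have been substituted explicitly. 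The hypotheses $\alpha^{-2}\beta_{i_0}\geq G$ on $(r_1,r_2)$ and $J\leq 0$ make every summand of $B(\va)$ non-negative, while $A(\va)>0$ since $\va\not\equiv 0$; therefore $\overline{\beta}_1^{i_0}\geq 0$.

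For the strict case, I would argue that $\va$ cannot vanish on an open subinterval of $(r_1,r_2)$ because uniqueness of solutions to the linear second-order ODE in \eqref{mixproblemwarped} would then force $\va\equiv 0$; nor can $\va(r_k)=0$, since together with the Robin condition this would give $\dot{\va}(r_k)=0$ and again $\va\equiv 0$. Consequently, if $\alpha^{-2}\beta_{i_0}>G$ on an open set, the integrand $(\alpha^{-2}\beta_{i_0}-G)\va^2$ is strictly positive on a set of positive measure; while if $J(r_k)<0$ at some endpoint, the corresponding boundary term in $B(\va)$ is strictly positive. Either way $B(\va)>0$ and thus $\overline{\beta}_1^{i_0}>0$.

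The main obstacle I anticipate is the bookkeeping in the integration by parts: the Robin boundary terms already carry the unknown $\overline{\beta}_1^{i_0}$ itself, so these contributions must be carefully isolated and moved to the left-hand side before any sign analysis is possible. The strict-positivity half also requires a mild unique-continuation argument, standard for Sturm--Liouville eigenfunctions but worth pointing out explicitly.
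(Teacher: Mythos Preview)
Your argument is correct and essentially identical to the paper's: both reduce to showing $\overline{\beta}_1^{i_0}\geq 0$ via the identity $\overline{\beta}_1^{i_0}A(\va)=B(\va)$ (the paper obtains it by substituting the boundary conditions into the Rayleigh quotient \eqref{rquotientwarped}, you by multiplying through and integrating by parts---these are the same computation), and then invoke Lemma~\ref{ineeigenvalues} together with $\overline{\beta}_1^i\leq\overline{\beta}_j^i$ to propagate the bound. Your treatment of the strict case is actually more careful than the paper's, which simply asserts positivity without the unique-continuation remark you supply.
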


\begin{proof} 
Let $\{\overline{\beta}^{i_0}_{j}\}_{j\in\mathbb{N}}$ be the sequence of all the eigenvalues	of the Sturm-Liouville problem \eqref{mixproblemwarped} with $\beta=\beta_{i_0}$. According to \eqref{rquotientwarped}, the first element of this sequence is characterized by
		


\[
\overline{\beta}_{1}^{i_0}=\min_{\va\in \mathcal{H}^1(r_1,r_2)} \frac{-\alpha^{n-1}\va\dot{\va}\Big|_{r_1}^{r_2} + \displaystyle\int_{r_1}^{r_2}\alpha^{n-1}\left[(\dot{\va})^2+\left(\alpha^{-2}\beta_{i_0}-G\right)\va^2\right]dr}{\displaystyle\int_{r_1}^{r_2}\alpha^{n-1}\va^2\,dr}.
\]

Therefore, assuming that $\va$ reaches such a minimum and using the boundary conditions in (\ref{mixproblemwarped}), yields

\[\begin{array}{rl}
\overline{\beta}_1^{i_0}\Big(\displaystyle\int_{r_1}^{r_2}\alpha^{n-1}\va^2\,dr+\alpha^{n-1}\left(\va^2(r_2)+\va^2(r_1)\right) \Big)\\[.3cm]=-\alpha^{n-1}\left(\va^2(r_2)J(r_2) + \va^2(r_1)J(r_1)\right)\\[.3cm] \qquad + \displaystyle\int_{r_1}^{r_2}\alpha^{n-1}\left[(\dot{\va})^2+\left(\alpha^{-2}\beta_{i_0}-G\right)\va^2\right]dr.
\end{array}
\]

As we can see, the right term of the above equality is non-negative under the hypothesis $J\leq 0$ and $\alpha^{-2}\beta_{i_0}\geq G$; and it is positive if one of previous inequalities is strict. Therefore, $\overline{\beta}_1^{i_0}$ is non-negative or positive accordingly. Finally, the result follows by recalling that

\[
\overline{\beta}^{i_0}_1\leq \overline{\beta}^i_1\leq \overline{\beta}^i_j,
\] 
where we have used Lemma \ref{ineeigenvalues} for the first inequality.
\end{proof}

\begin{remark}{\em 
Previous result is optimal in the sense that, if we have that $J=0$ and $\alpha^{-2}\beta_{i_0}=G$, then zero can be an eigenvalue of \eqref{mixproblemwarped}. In fact, if $\alpha\equiv 1$, the constant function is an eigenfunction of the zero eigenvalue.}
\end{remark}

As we will see in the forthcoming sections, Proposition \ref{conditionwarpedmixed} is particularly interesting in order to obtain results for both rigidity and bifurcation, whenever $J\leq 0$. If $G\leq 0$, it says that all the eigenvalues of the mixed problem are non-negative. If $G>0$, it will restrict the possible zero eigenvalues to a finite family of Sturm-Liouville problems (see Section \ref{sec5.2}).

\section{Rigidity under metric variations}\label{section3}

\subsection{Stating the variational problem}\label{metric}

We will consider the variational approach first presented by York \cite{York}, Gibbons and Hawking \cite{GH}, which has been further studied by Araujo in \cite{A}. In these works, they consider a functional whose critical points are metrics with constant scalar curvature on the manifold $M$ and where the boundary $\partial M$ is composed by a minimal hypersurface $\bord{1}$ and a CMC hypersurface $\bord{2}$.

Let us begin by considering the so-called Gibbons-Hawking-York functional, GHY-functional for short (also known as the total scalar curvature plus total mean curvature functional):

\begin{equation}\label{GHY}
\begin{array}{l}\ghy:\bifspacem\rightarrow \mathbb{R}\nonumber,\qquad g\longmapsto \ghy(g)=\displaystyle\int_M \scal{g}\,\fv{g}+2\displaystyle\int_{\partial M}\mean{g}\,\fvr{g},\end{array}
\end{equation}
where $R(g)$ is the scalar curvature in $M$ and $H(g)$ is the mean curvature on $\partial M$. The first variation of such a functional reads as follows (see Section \ref{appendix}, Equation \eqref{secondghy} in the Appendix for details)

\begin{equation}\label{firstghy}
\def\arraystretch{2.2}
\begin{array}{r>{\displaystyle}l}

\delta \ghy_{g}(h) = & -\int_M\left(R_{ij}-\frac{1}{2}g_{ij}\scal{g}\right)h^{ij} \fv{g}\\
 & - \int_{\partial M} \left(\II_{\alpha\beta}
-\mean{g} g_{\alpha\beta}\right)h^{\alpha\beta}\fvr{g},\\
\end{array}
\end{equation}

\noindent where $R_{ij}$, $h^{ij}$ and $\II_{\alpha\beta}$ are the components for the Ricci tensor, the $(0,2)$-tensor $h\in T_g\bifspacem(=S^k(M))$ and the second fundamental form $\II$ respectively. From here, we deduce the following result

\begin{proposition}

A Riemannian metric $g$ is a critical point for the functional $\ghy$ if and only if the Riemannian manifold $(M,g)$ is Ricci flat and $\partial M$ is totally geodesic.
\end{proposition}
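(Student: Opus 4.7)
The plan is to apply the first variation formula \eqref{firstghy} combined with the fundamental lemma of the calculus of variations.

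For the implication ($\Leftarrow$), if $(M,g)$ is Ricci flat then $R_{ij}=0$ and, tracing with $g^{ij}$, also $\scal{g}=0$. Hence the Einstein-type tensor $R_{ij}-\frac{1}{2}g_{ij}\scal{g}$ vanishes identically and the interior integrand in \eqref{firstghy} is zero. Analogously, if $\partial M$ is totally geodesic then $\II_{\alpha\beta}=0$ and its trace $\mean{g}$ vanishes, so $\II_{\alpha\beta}-\mean{g}g_{\alpha\beta}=0$ and the boundary integrand also vanishes. Therefore $\delta\ghy_{g}(h)=0$ for every $h\in T_g\bifspacem$.

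For the converse ($\Rightarrow$), I would first localize by testing $\delta\ghy_{g}(h)=0$ against variations $h$ compactly supported in the interior of $M$, which kills the boundary contribution. The fundamental lemma applied to
\[
-\int_{M}\left(R_{ij}-\frac{1}{2}g_{ij}\scal{g}\right)h^{ij}\,\fv{g}=0
\]
for every symmetric test tensor $h$ forces the pointwise identity $R_{ij}-\frac{1}{2}g_{ij}\scal{g}=0$ in the interior and, by continuity, on all of $M$. Tracing with $g^{ij}$ yields $\left(1-\frac{n}{2}\right)\scal{g}=0$; since $n\geq 3$ this gives $\scal{g}=0$ and therefore $R_{ij}=0$, i.e., $(M,g)$ is Ricci flat.

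With the interior term now vanishing for every $h$, the equation $\delta\ghy_{g}(h)=0$ reduces to
\[
\int_{\partial M}\left(\II_{\alpha\beta}-\mean{g}g_{\alpha\beta}\right)h^{\alpha\beta}\,\fvr{g}=0
\]
for all $h\in T_g\bifspacem$. Since every symmetric $(0,2)$-tensor field on $\partial M$ arises as the tangential part of some $h\in T_g\bifspacem$ (via a suitable extension), a second application of the fundamental lemma gives $\II_{\alpha\beta}-\mean{g}g_{\alpha\beta}=0$ on $\partial M$. Tracing this boundary equation with $g^{\alpha\beta}$, and using the convention relating $\mean{g}$ to the trace of $\II$, forces $\mean{g}=0$ in dimension $n\geq 3$, hence $\II_{\alpha\beta}=0$, so $\partial M$ is totally geodesic. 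I anticipate no serious obstacle; the only minor subtlety is keeping track of the normalization of $\mean{g}$ so that the boundary trace argument is nondegenerate rather than tautological.
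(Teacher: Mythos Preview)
Your argument is correct and matches the paper's approach: the paper simply states ``From here, we deduce the following result'' immediately after the first variation formula \eqref{firstghy}, leaving the fundamental-lemma-plus-trace argument implicit. Your only flagged subtlety is resolved in this paper's conventions, since $\mean{g}=g^{\alpha\beta}\II_{\alpha\beta}$ is the full trace (see \eqref{meancurvature1} in the Appendix), so tracing $\II_{\alpha\beta}-\mean{g}g_{\alpha\beta}=0$ gives $(2-n)\mean{g}=0$, which is nondegenerate for $n\geq 3$.
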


Therefore, the GHY-functional defined in the whole set of Riemannian metrics does not determine the variational problem of our interest. In order to solve this, we will restrict the domain of the functional to a particular subset of Riemannian metrics. For the sake of clearness, such restriction will be performed in two steps. On the first one, we will consider Riemannian metrics satisfying the constraint $\Cab(g)=1$, where:

\begin{align*}\label{defCab}
\Cab(g)=a\int_{M}\fv{g} + b\int_{\bord{2}}\fvr{g_{_2}},
\end{align*}
 with $a,b$ real numbers, $a\geq 0$, and $g_{_2}$ denoting the metric $g$ restricted to $\bord{2}$.

\begin{remark}\label{remark1}
\hfill\break
\begin{itemize}
\item[(i)]{\em
It is clear that the above constraint was deduced from \eqref{c}, since in our problem only $\Sigma_2$ must have constant mean curvature, and not all the boundary. In fact, as it is known, $\int_{ M}\fv{g}$ determines the volume of $M$ while $\int_{\bord{2}}\fvr{g_{_2}}$ determines the area of $\bord{2}$.\medskip

\item[(ii)] Note that if $a=0$ (resp. $b=0$) then $b=1/\int_{\bord{2}}\fvr{g_{_2}}>0$ (resp. $a=1/\int_{ M}\fv{g}>0$). So, it is not restrictive to assume that if $a=0$ then $b=1$; or, in other case, if $b=0$ then $a=1$.}
\end{itemize}
\end{remark}

\smallskip

Let us denote by $\bifspacem_{a,b}$ the space of metrics under the following constraint
\[
\bifspacem_{a,b}:=\{g\in \bifspacem: \,\Cab(g)=1\}.
\]
In order to study the critical points of the GHY-functional restricted to $\bifspace_{a,b}$, the Lagrange multipliers method leads us to the study of the following functional

\begin{equation}\label{deffunclambda}
\biffunm{\lambda}(g)=\ghy{(g)}-\lambda(\Cab(g)-1), \textrm{ for some }\lambda\in\mathbb{R},
\end{equation}
%
whose first variation takes the following form
\[\def\arraystretch{2.2}\begin{array}{r>{\displaystyle}l}
\delta\, (\biffunm{\lambda})_{g}(h) = & \int_{M}\left(R_{ij}-\frac{(\scal{g}+\lambda a)}{2}g_{ij}\right)h^{ij}\fv{g} +\\
 & + \int_{\bord{1}}\left(\II_{\alpha\beta}-\mean{g_{_1}}g_{\alpha\beta}\right)h^{\alpha\beta} \fvr{g_{_1}}+\\
  & + \int_{\bord{2}}\left(\II_{\alpha\beta}-\frac{(2\mean{g_{_2}}+\lambda b)}{2}g_{\alpha\beta}\right)h^{\alpha\beta}\fvr{g_{_2}}
\end{array}
\]
where $g_{_k}$ denotes the metric $g$ restricted to $\bord{k}$. Then, by a classical argument involving the first Bianchi identity, we obtain the following result
\begin{proposition}\label{propcriticos}
A Riemannian metric $g$ on $M^n$ is critical for $\ghy$ restricted to the space $\bifspacem_{a,b}$ if and only if $g$ is Einstein, $\bord{1}$ is totally geodesic, $\bord{2}$ is totally umbilical with constant mean curvature and, if $\scal{g}$ denotes the scalar curvature and $\mean{g_{_2}}$ the mean curvature in $\bord{2}$, then
\begin{equation}\label{relscalmean}
\begin{array}{c}
(n-1)\,b\,\scal{g}=2n\,a\,\mean{g_{_2}}.
\end{array}\end{equation}
\end{proposition}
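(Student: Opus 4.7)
My plan is to apply the Lagrange multiplier theorem: a metric $g\in\bifspacem_{a,b}$ is critical for $\ghy$ restricted to $\bifspacem_{a,b}$ if and only if there exists $\lambda\in\R$ with $\delta(\biffunm{\lambda})_g(h)=0$ for every $h\in T_g\bifspacem=S^{k}(M)$. Inserting the first variation formula displayed just above the statement, the critical-point condition splits into three independent vanishing requirements, one in the interior of $M$ and one on each of $\bord{1}$ and $\bord{2}$, since $h$ can be chosen with support concentrated in a collar of each region independently of the other two.

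From the interior variation the integrand must vanish pointwise, yielding
\[
R_{ij}=\tfrac{R(g)+\lambda a}{2}g_{ij},
\]
so $g$ is Einstein. Taking the metric trace gives $(2-n)R(g)=n\lambda a$, and the classical argument using the twice-contracted second Bianchi identity $\nabla^{i}R_{ij}=\tfrac12\nabla_{j}R$ then guarantees that $R(g)$ is constant. The $\bord{1}$ integrand forces $\II_{\alpha\beta}=H(g_{_1})g_{\alpha\beta}$, so $\bord{1}$ is totally umbilical; applying the Codazzi equation in an Einstein ambient space shows $H(g_{_1})$ is locally constant, and combined with the minimality hypothesis used throughout the paper this upgrades to totally geodesic. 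Analogously on $\bord{2}$ one reads off $\II_{\alpha\beta}=\tfrac{2H(g_{_2})+\lambda b}{2}g_{\alpha\beta}$, and the same Codazzi argument identifies $\bord{2}$ as totally umbilical with constant mean curvature.

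The step I expect to be the main obstacle is extracting the proportionality $(n-1)\,b\,R(g)=2n\,a\,H(g_{_2})$, which is really a compatibility condition on the single Lagrange multiplier $\lambda$. My plan is to produce two expressions for $\lambda$: one from the trace of the interior Einstein equation, $n\lambda a=(2-n)R(g)$, and a second from the $\bord{2}$ equation by contracting with $g^{\alpha\beta}$ and using the tautology $H(g_{_2})=(n-1)^{-1}g^{\alpha\beta}\II_{\alpha\beta}$ to relate $\lambda b$ to $H(g_{_2})$. Eliminating $\lambda$ between these two relations then yields the stated formula. For the converse, one runs the argument backwards: given a metric satisfying the four geometric conditions with the stated proportionality, one selects $\lambda$ so that $n\lambda a=(2-n)R(g)$ and verifies, using $(n-1)bR(g)=2naH(g_{_2})$, that the $\bord{2}$ integrand vanishes identically as well, confirming criticality.
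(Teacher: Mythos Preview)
Your overall strategy---Lagrange multipliers, pointwise vanishing of the three integrands, then algebraic elimination of $\lambda$---matches the paper's approach (the paper itself only gives a one-line proof invoking the Bianchi identity, so your write-up is in fact more detailed).

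There is, however, a genuine gap in your treatment of $\bord{1}$. You argue: the boundary integrand gives $\II_{\alpha\beta}=H(g_{_1})g_{\alpha\beta}$, hence totally umbilical; Codazzi in an Einstein ambient gives $H(g_{_1})$ locally constant; and then ``combined with the minimality hypothesis used throughout the paper this upgrades to totally geodesic.'' But Proposition~\ref{propcriticos} carries no minimality hypothesis on $\bord{1}$: the totally geodesic conclusion is part of what must be \emph{proved}, not assumed. Invoking minimality here is circular.

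The fix is elementary and makes the Codazzi step unnecessary. In the paper's convention $H=g^{\alpha\beta}\II_{\alpha\beta}$ (the full trace, not normalized). Tracing $\II_{\alpha\beta}=H(g_{_1})g_{\alpha\beta}$ over the $(n-1)$-dimensional boundary gives $H(g_{_1})=(n-1)H(g_{_1})$, so $(n-2)H(g_{_1})=0$, hence $H(g_{_1})=0$ since $n\ge 3$, and therefore $\II\equiv 0$: $\bord{1}$ is totally geodesic. The same trace trick on $\bord{2}$ yields directly $2H(g_{_2})=(n-1)(2H(g_{_2})+\lambda b)$, i.e.\ $H(g_{_2})=\dfrac{(n-1)\lambda b}{2(2-n)}$, which is constant (since $\lambda$ and $b$ are numbers) without appealing to Codazzi. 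Likewise, tracing the interior equation gives $R(g)=\dfrac{n\lambda a}{2-n}$ directly, so the Bianchi argument, while correct, is also redundant. Your elimination of $\lambda$ to obtain $(n-1)\,b\,R(g)=2n\,a\,H(g_{_2})$ and the converse direction are fine.
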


\smallskip

\smallskip


In the second step, we restrict the domain of $\ghy$ to the space of conformal metrics of $g$ which also lie in $\bifspacem_{a,b}$, that is, we will consider the subset of Riemannian metrics

\[
\mathrm{Conf}_{a,b}(g):=\bifspacem_{a,b}\cap \conf g
\]
where $\conf g$ is defined in \eqref{eq:defconf}.

\begin{remark}\label{remarkbifurcation}{\em
Recall that our aim is to use the fiber bundle version of the Implicit Theorem, and so, a Banach space as $\mathcal{H}^1$ will be enough for our purposes. However, for the study of bifurcation points, we have to deal with the fiber bundle version of the classical result given by Smoller and Wasserman for bifurcation \cite{S-W}. This result requires additional technical conditions as Fredholmness, Palais-Smale, among others, and so, the Sobolev space has not the required regularity. In such a case, it is necessary to restrict even more the conformal class to the H\"older space ${\mathcal C}^{k,\alpha}$ with $k\geq 0$ and $0<\alpha\leq 1$ (see \cite{LPZ2} for detailed studies).}
\end{remark}

\smallskip

The set $\mathrm{Conf}_{a,b}(g)$ is a smooth submanifold of $\conf g$ because it is the set of regular points of the map $C_{a,b}(g)$, being its tangent space $T_g(\mathrm{Conf}_{a,b}(g))$ identified with the set

\begin{equation}\label{tangentspace}
\mathcal{H}^1_{a,b}(M)\!=\!
\left\{f\in \mathcal{H}^1(M): f>0 \,\textrm{ and }\,
\frac{na}{2}\int_{M}f \fv{g}+\frac{(n-1)b}{2}\int_{\bord{2}}f\fvr{g_{_2}}=0\right\}.
\end{equation}
Thus, under this last restriction, the first variation of the functional $\biffunm{\lambda}$ becomes (see \eqref{firstcabfg}, \eqref{firstFfg})

\[\def\arraystretch{2.2}\begin{array}{r>{\displaystyle}l}
\delta (\biffunm{\lambda})_{g}(f)= &\int_{M} \left(\scal{g}-n\frac{(\scal{g}+\lambda a)}{2}\right) f\fv{g} +\\
 & - \int_{\bord{1}}(n-2)\mean{g_{_1}} f \fvr{g_{_1}} +\\
 & - \int_{\bord{2}}\left((n-2) \mean{g_{_2}} - (n-1)\frac{\lambda b}{2}\right) f \fvr{g_{_2}}
\end{array}
\]
with $f\in \mathcal{H}^1_{a,b}(M)$. Therefore, we finally conclude that 

\begin{proposition}\label{criticalpointch}
A Riemannian metric $g$ is a critical point of the functional $\ghy$ restricted to the space $\mathrm{Conf}_{a,b}$ if and only if the scalar curvature $\scal{g}$ is constant on $M$, $\bord{1}$ is a minimal hypersurface ($\mean{g_{_1}}=0$), and $\bord{2}$ has constant mean curvature $\mean{g_{_2}}$.
Under these assumptions, $\scal{g}$ and $\mean{g_{_2}}$ are related by \eqref{relscalmean}.
\end{proposition}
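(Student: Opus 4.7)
The plan is to combine the Lagrange multiplier principle for the constraint $\Cab(g)=1$ with the fundamental lemma of the calculus of variations, applied to the first variation formula for $\biffunm{\lambda}$ already displayed just before the statement. First, I would check that every $g\in\mathrm{Conf}_{a,b}(g)$ is a regular point of the constraint map $\Cab\colon\conf g\to\R$: evaluating its differential on the conformal direction $f\equiv 1$ yields $\tfrac{na}{2}\int_M\fv g+\tfrac{(n-1)b}{2}\int_{\bord{2}}\fvr{g_{_2}}$, which is strictly positive under the normalization of Remark \ref{remark1}. Consequently, $g$ is critical for $\ghy$ restricted to $\mathrm{Conf}_{a,b}(g)$ if and only if there exists some $\lambda\in\R$ such that $\delta(\biffunm{\lambda})_g(f)=0$ for every $f\in\mathcal{H}^1(M)$, and not merely for $f\in\mathcal{H}^1_{a,b}(M)$.

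With this reduction in hand, the next step is to localize the resulting identity
\[
\int_M A\,f\,\fv g-\int_{\bord{1}}(n-2)\mean{g_{_1}}f\,\fvr{g_{_1}}-\int_{\bord{2}}C\,f\,\fvr{g_{_2}}=0,
\]
where $A=\scal g-n(\scal g+\lambda a)/2$ and $C=(n-2)\mean{g_{_2}}-(n-1)\lambda b/2$. Choosing $f$ compactly supported in $M\setminus\partial M$ kills the boundary terms and forces $A\equiv 0$, so $\scal g$ is constant on $M$ and determined by $\lambda$. Using then that $\bord{1}$ and $\bord{2}$ are disjoint unions of connected components of $\partial M$, and that the trace operator $\mathcal{H}^1(M)\to L^2(\partial M)$ has dense range, testing against functions supported near each piece separately yields $\mean{g_{_1}}\equiv 0$ (so $\bord{1}$ is minimal) and $C\equiv 0$ (so $\mean{g_{_2}}$ is constant on $\bord{2}$). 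Eliminating the multiplier $\lambda$ between the two algebraic relations that express $\scal g$ and $\mean{g_{_2}}$ in terms of $\lambda$ produces exactly \eqref{relscalmean}. Conversely, given constant $\scal g$, minimal $\bord{1}$, and constant $\mean{g_{_2}}$ satisfying \eqref{relscalmean}, the common value of $\lambda$ prescribed by either relation makes all three integrand coefficients vanish simultaneously, and $g$ is critical by Lagrange.

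The degenerate cases $a=0$ or $b=0$ do not need a separate treatment: under the normalization of Remark \ref{remark1} the corresponding term simply drops out of $A$ or $C$, directly forcing $\scal g\equiv 0$ (when $a=0$) or $\mean{g_{_2}}\equiv 0$ (when $b=0$), and \eqref{relscalmean} is then trivially satisfied. The argument has no real conceptual obstacle; the only points demanding care are the two localization steps (interior versus boundary test functions) and the algebraic bookkeeping when eliminating $\lambda$, so that the sign conventions fit together and yield precisely the relation \eqref{relscalmean} rather than its negative.
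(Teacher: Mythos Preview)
Your proposal is correct and follows the same approach as the paper: both use the Lagrange multiplier functional $\biffunm{\lambda}$ and its first variation formula (displayed immediately before the proposition), then read off the Euler--Lagrange conditions by the fundamental lemma of the calculus of variations. The paper in fact leaves the argument implicit after displaying $\delta(\biffunm{\lambda})_g(f)$, so your write-up is more detailed than what appears there.

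One small remark on your regularity check: invoking Remark~\ref{remark1} alone does not quite yield strict positivity of $\tfrac{na}{2}\int_M\fv g+\tfrac{(n-1)b}{2}\int_{\bord{2}}\fvr{g_{_2}}$ when $a>0$ and $b<0$. What actually works is to use the constraint $\Cab(g)=1$ together with $a\geq 0$: writing $V=\int_M\fv g$ and $A=\int_{\bord{2}}\fvr{g_{_2}}$, one has
\[
\frac{na}{2}V+\frac{(n-1)b}{2}A=\frac{n-1}{2}(aV+bA)+\frac{aV}{2}=\frac{n-1}{2}+\frac{aV}{2}>0,
\]
which gives the desired non-degeneracy in all cases.
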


\begin{remark}\label{remarkabsecond}{\em
For prescribed $\scal{g}$ and $H(g_{_2})$, the values of $a$ and $b$ are determined by the equations $\Cab(g)=1$ and (\ref{relscalmean}) (see also Remark \ref{remark1}). Moreover, if $g$ is one of the critical points described in Proposition \ref{criticalpointch}, it will be also critical for the functional $\biffunm{\lambda}$ defined in (\ref{deffunclambda}) with $\lambda$ solving one of the following equations (if $a\neq 0 \neq b$, both equations define the same $\lambda$ assuming (\ref{relscalmean})):

\begin{equation}\label{lambdameanscalar}
b\lambda=H(g_{_2})\frac{2(n-2)}{n-1},\quad \textrm{and}\quad a\lambda=R(g)\frac{n-2}{n}.
\end{equation}
}\end{remark}

\smallskip

This last restriction lead us to a variational problem whose critical points satisfy the desired properties. As we brought to remembrance at the end of Section \ref{sec2.1}, the criteria for rigidity (as well as for bifurcation) make use of the second variation of the functional over critical points. For
%
$g$ a critical point of $\biffun{\lambda}$, with $\lambda$ depending on both, $R(g)$ and $H(g_{_2})$,
the quadratic form associated of such second variation takes the following form on $\mathcal{H}^1_{a,b}(M)$ (see \eqref{cabsecond}, \eqref{secondvariationghy} and the Appendix for details)
{\small
\begin{align*}\label{segunda}
\nonumber\qquad\delta^2 (\biffunm{\lambda})_{g}(f,f)= &\frac{(n-2)(n-1)}{2}\left( \int_{M}\!\left[|\grad{f}|_g^2-\!\frac{\scal{g}}{(n-1)}f^2 \right]\!\fv{g}-\!\int_{\partial M}\!\frac{\mean{g}}{(n-1)}f^2 \fvr{g}\right) \\ = & \frac{(n-2)(n-1)}{2}\left( \int_{M}\!\left[f\Delta_gf-\frac{\scal{g}}{(n-1)}f^2 \right] \!\fv{g}+\!\int_{\partial M}\!\left[-f\partial_nf-\frac{\mean{g}}{(n-1)}f^2\right]\!\fvr{g}\right)
\end{align*}}

\noindent where $|\grad{f}|_g^2$ denotes the squared norm of the gradient of $f$, 
$\Delta_g$ is the Laplace-Beltrami operator and $\partial_n$ is the inward normal derivative (recall that, for the second equality, we have used the first Green identity). Then, we can recover the expression for the second variation and, even more, we can describe it in terms of Fredholm operators by using the following inner product on the space $L^2(M)\cap L^2(\partial M)$

\[
\mathcal{E}(k,f)=\int_M kf \,\fv{g}+\int_{\partial M} kf\,\fvr{g}.\]

In particular, the second variation takes the following form

\begin{align*}
\delta^2 (\biffunm{\lambda})_{g}(f,k) =  \frac{(n-2)(n-1)}{2}\mathcal{E}(J_g(f),k)
\end{align*}
where $J_g$ is a linear elliptic operator given by
\[J_g|_{M}=\Delta_g -\dfrac{\scal{g}}{(n-1)},\qquad\quad J_g|_{\partial M}=-\partial_n-\frac{\mean{g}}{(n-1)} .
\]
From the second Green identity follows that $J_g$ is a self-adjoint operator relative to the $L^2(M)\cap L^2(\partial M)$-inner product. Namely 
\begin{align*}
\mathcal{E}(J_g(f),k)
&=\int_M\left(k\Delta_gf -\dfrac{\scal{g}}{(n-1)}kf\right)\fv{g}
-\int_{\partial M}\left(k\partial_nf+ \frac{\mean{g}}{(n-1)} kf\right)\fvr{g}\\&=\int_M\left(f\Delta_gk -\dfrac{\scal{g}}{(n-1)}kf\right)\fv{g}
-\int_{\partial M}\left(f\partial_nk+ \frac{\mean{g}}{(n-1)} kf\right)\fvr{g}\\&=
\mathcal{E}(f,J_g(k)).
\end{align*}

Therefore, $J_g$ coincide with the Jacobi operator associated to $\delta^2(\biffun{\lambda})_g$. Then, for our studies on rigidity, we have to study the spectrum of $J_g$ which lead us to the mixed eigenvalue problem

\begin{eqnarray}\label{bifvariation}
\left\{
\begin{array}{ccc}\Delta_g f-\dfrac{\scal{g}}{(n-1)}f=\overline{\mu} f\quad \textrm{in } M,\\\\
-\partial_n f-\dfrac{\mean{g}}{(n-1)}f=\overline{\mu} f
\quad \textrm{on } \partial M
\end{array}
\right.
\end{eqnarray}

\noindent with $f\in \mathcal{H}^1_{a,b}(M)$.

\begin{remark}\label{remarkadmissible}{\em
		It is worth pointing out that in spite of the general case studied in Section \ref{mixed}, here the eigenfunctions should belong to $\mathcal{H}^1_{a,b}(M)$. This means that not all the eigenvalues of previous mixed problem are considered, but instead a subfamily of them. We will call {\em admissible} to an eigenvalue of \eqref{bifvariation} whose associated eigenfunction satisfies the integral condition in \eqref{tangentspace}.   
		}
\end{remark}

\smallskip

\subsection{General rigidity results under metric variations}\label{gere}

The lower bound for eigenvalues obtained in Proposition \ref{lowboundg} allows us to give a general result for rigidity associated to metric variations by giving a very simple and geometrical argument.

\begin{theorem}\label{tege}
Let $M$ be a compact manifold with boundary $\partial M=\Sigma_1\cup \Sigma_2$. Consider $\{g_{\lambda}\}_{\lambda\in I}$ a family of metrics which are critical points for the functional $\biffunm{\lambda}$ restricted to $\mathrm{Conf}_{a,b}$ for some $a=a(\lambda)$ and $b=b(\lambda)$; that is, metrics with constant scalar curvature in $M$ and with zero mean curvature on $\Sigma_1$ and constant mean curvature on $\Sigma_2$ 
. If 
$R(g_{\lambda_*}),H((g_{\lambda_*})_{_2})\leq 0$ for some $\lambda_*\in I$, being one of these inequalities strict, then the family $\{g_{\lambda}\}_{\lambda\in I}$ is locally rigid at $\lambda_*$.
\end{theorem}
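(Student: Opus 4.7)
The plan is to apply the non-degeneracy criterion sketched at the end of Section \ref{sec2.1}: show that, under the stated sign conditions on $R(g_{\lambda_*})$ and $H((g_{\lambda_*})_{_2})$, every eigenvalue $\overline{\mu}$ of the Jacobi mixed problem \eqref{bifvariation} at $g_{\lambda_*}$ is strictly positive; then the fiber bundle version of the Implicit Function Theorem from \cite[Appendix A]{LPZ2} yields local rigidity at $\lambda_*$.

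Concretely, I identify the Jacobi problem \eqref{bifvariation} with the mixed eigenvalue problem \eqref{mixequation} by setting $G=R(g_{\lambda_*})/(n-1)$ on $M$, together with $J=0$ on $\Sigma_1$ (because $\Sigma_1$ is minimal) and $J=H((g_{\lambda_*})_{_2})/(n-1)$ on $\Sigma_2$. The hypotheses then translate into $G\leq 0$ and $J\leq 0$, so that $G^+\equiv 0$ and $J^+\equiv 0$. Applying Proposition \ref{lowboundg} gives at once the non-strict estimate $\overline{\mu}\geq 0$ for every eigenvalue.

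The main step is to upgrade this to $\overline{\mu}>0$, which is where the strict inequality hypothesis enters. Normalizing an eigenfunction $\varphi$ by \eqref{auxeq1}, the identity \eqref{auxeq2} becomes
\[
\overline{\mu}=\int_{M}|\mathrm{grad}\,\varphi|_g^2\,\fv{g}-\int_{M}G\varphi^2\,\fv{g}-\int_{\partial M}J\varphi^2\,\fvr{g},
\]
and under the sign conditions all three summands are non-negative. So $\overline{\mu}=0$ forces each of them to vanish. From the vanishing of the first term, $\varphi$ is constant on $M$. If $R(g_{\lambda_*})<0$, then $G$ is a negative constant, and $\int_M G\varphi^2\,\fv{g}=0$ forces $\varphi\equiv 0$, contradicting the normalization; if instead $H((g_{\lambda_*})_{_2})<0$, then $J$ is a negative constant on $\Sigma_2$, and $\int_{\Sigma_2}J\varphi^2\,\fvr{g}=0$ forces the constant $\varphi$ to vanish on $\Sigma_2$, hence on all of $M$, again a contradiction. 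Therefore every eigenvalue of \eqref{bifvariation} is strictly positive; in particular, every admissible eigenvalue (in the sense of Remark \ref{remarkadmissible}) is positive, so the Jacobi operator $J_{g_{\lambda_*}}$ is an isomorphism on $\mathcal{H}^1_{a,b}(M)$.

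Having non-degeneracy of the second variation, the Implicit Function Theorem argument of \cite[Appendix A]{LPZ2} applies to the path of functionals $\mathcal{F}^\lambda$ restricted to the family of conformal fibers $\mathrm{Conf}_{a(\lambda),b(\lambda)}(g_\lambda)$, yielding that any critical point sufficiently close to $g_{\lambda_*}$ (with the appropriate prescribed scalar and boundary mean curvatures) must coincide with some $g_\lambda$. This is exactly the local rigidity of $\{g_\lambda\}$ at $\lambda_*$. The main obstacle is the rigidity/strictness argument for $\overline{\mu}>0$: everything else is a direct application of the machinery already developed in Sections \ref{mixed} and \ref{metric}.
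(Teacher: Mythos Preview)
Your proof is correct and follows the same approach as the paper: identify the Jacobi spectrum with the mixed eigenvalue problem \eqref{bifvariation}, invoke Proposition \ref{lowboundg} to obtain nonnegativity, and then apply the Implicit Function Theorem from \cite[Appendix A]{LPZ2}. The paper's own proof is in fact terser and simply asserts positivity of the eigenvalues from Proposition \ref{lowboundg}; your explicit argument ruling out $\overline{\mu}=0$ via the strict sign hypothesis fills in a detail the paper leaves implicit.
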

\begin{proof} Let us study the spectrum of $J_{g_\lambda}$ for $\lambda=\lambda_*$. As we have pointed out in the previous section, the spectrum of such a linear map is related with the mixed eigenvalue problem detailed on (\ref{bifvariation}) with $g=g_{\lambda_*}$. Now, Proposition \ref{lowboundg} ensures that
all the eigenvalues for the Jacobi operator are positive and $g_{\lambda_*}$ is a non-degenerate critical point. Then, the result follows from the fiber bundle version of the Implicit Function Theorem (see \cite[Appendix A]{LPZ2}).
\end{proof}

\section{Metric rigidity for foliations in warped product spaces} \label{wm}

Warped product spaces determines a foliation of the entire space by CMC-hypersurfaces known as slices (the natural leafs of the product). Such a foliation can be related with a family of metrics defined on a (fixed) compact subspace with the structure of a manifold with boundary. In particular, under the additional assumptions of constant scalar curvature on the space and the minimality of one of the leafs, our previous results on metric rigidity are applicable. As the construction of the metrics from the foliation is reversible, such a rigidity will be interpretable as metric rigidity for the natural foliation on warped spaces.

\smallskip

Let us consider \begin{equation}\label{wpf}(M^n,g)=((r_1,r_2)\times P^{n-1}, dr^2+\alpha(r)^2g^P)\end{equation} an $n$-dimensional warped product space with constant scalar curvature where $P$ is an $(n-1)$-dimensional closed (compact without boundary) manifold. For each $\gamma\in [r_1,r_2)$, let us denote by $\Sigma_{\gamma}=\{\gamma\}\times P$ a leaf of the natural foliation by CMC-hypersurfaces of the warped product. Along this section, we will assume that the metric is extensible to the slice $\Sigma_1:=\Sigma_{r_1}$ and $\Sigma_1$ 
is a minimal hypersurface. Moreover, we will also assume that $r_2\in \mathbb{R}\cup\{\infty\}$ (the case with $\{r_2\}\times P$ minimal is analogous). 

For $\gamma\in (r_1,r_2)$, consider the slab $\opens_{\gamma}=[r_1,\gamma]\times P$ a compact manifold whose boundary is composed by the leafs $\Sigma_1$ and $\Sigma_\gamma$ of the foliation (see Figure \ref{fig2}), being the (constant) mean curvature of the latter given by
\begin{align}\label{CMC}
H(\gamma)=-(n-1)\frac{\dot{\alpha}(\gamma)}{\alpha(\gamma)}
\end{align}
(recall that, as $\Sigma_1$ is minimal, previous equation implies that $\dot{\alpha}(r_1)=0$). 

 By a standard procedure, for a fixed $r_0\in (r_1,r_2)$ we can define a diffeomorphism $\Psi_\gamma:\opens_{\gamma}\rightarrow \opens_{r_0}$ preserving the orientation of $\partial/\partial r$ (for $\gamma=r_0$, such a diffeomorphism is just the identity). Then, we define a family of metrics $\{g_{\gamma}\}_{\gamma\in (r_1,r_2)}$ on $\opens_{r_0}$ given by the push forward of the metric $g$ of $\opens_{\gamma}$ on $\Omega_{r_0}$, i.e., 
\begin{equation}\label{defmetric}
g_{\gamma}=(\Psi_{\gamma})_*g.
\end{equation}
\vspace*{0.1cm}

\begin{figure}[htbp]
	\includegraphics[width=12cm, height=9cm]{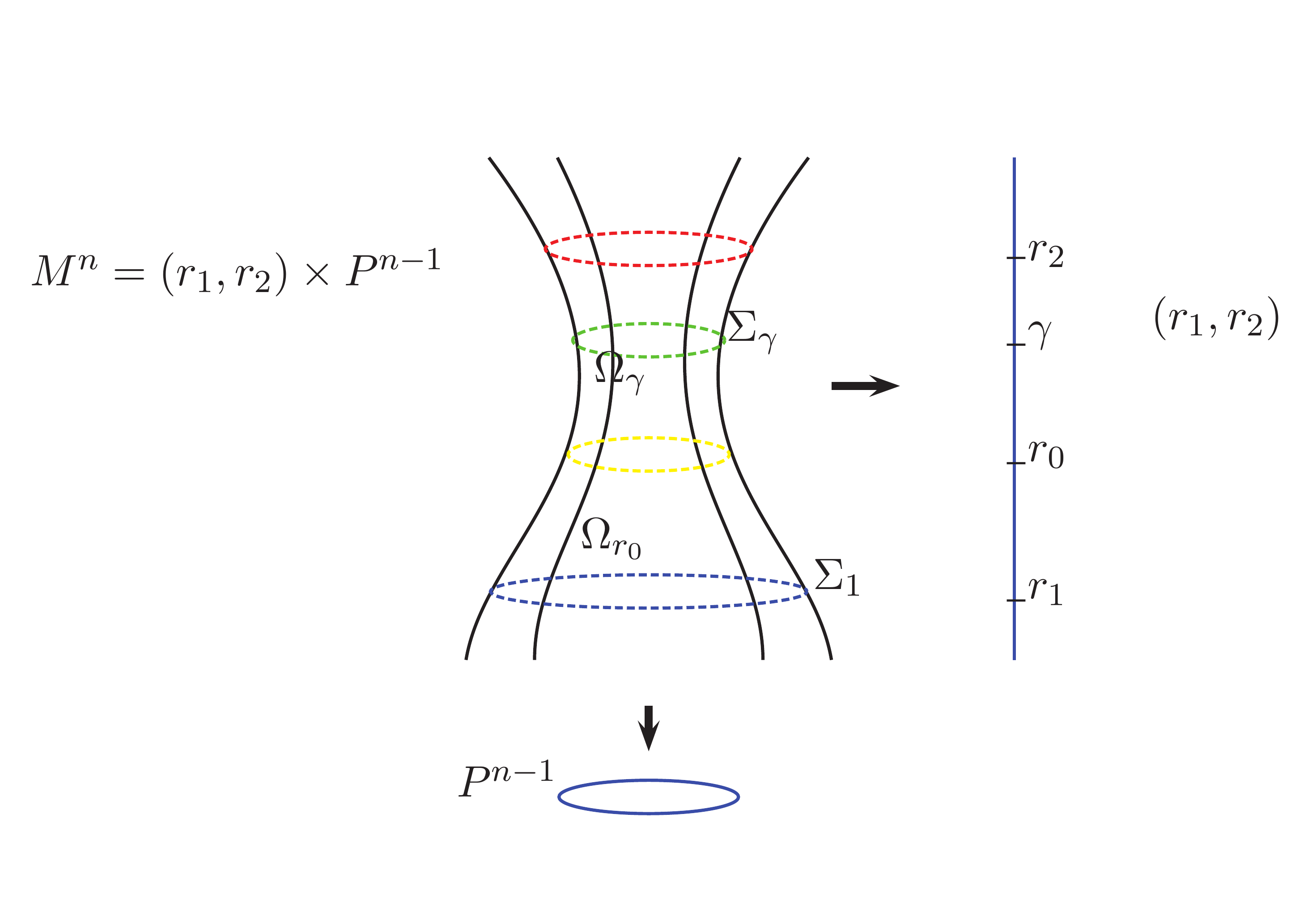}
\caption{\label{fig2}Construction of $\Omega_{\gamma}$'s.}
\end{figure}

\begin{remark}\label{remarkiden}{\em
As it is clear from the above construction, both $(\opens_{r_0},g_{\gamma})$ and $(\opens_{\gamma},g)$ are isometric under $\Psi_\gamma$, and then, they share the same geometrical properties.
  So that, we can make the computations with the latter, which will be simpler in practical cases. Moreover, this approach shows the independence on $r_0$ of our results.
  }
\end{remark}

By taking into account the properties described for the slab $\Omega_{\gamma}$ and previous remark, we have that
 $g_{\gamma}$ is a critical point for the functional $\biffunm{\lambda(\gamma)}$, being $\lambda:(r_1,r_2)\rightarrow \R$ a function given implicitly on (\ref{lambdameanscalar}) (recall that $a$ and $b$ are determined by $R(g_{\gamma})$ and $H((g_{\gamma})_{_2})\equiv H(\gamma)$, see Remark \ref{remarkabsecond}). So, we are ready to apply Theorem \ref{tege} to the family $\{g_{\gamma}\}_{\gamma\in (r_1,r_2)}$ defined on $\Omega_{r_0}$.

However, it is interesting to present the result on this section, not as a property for the family of metrics, but for the family of compact sets $\{\Omega_{\gamma}\}_{\gamma\in (r_1,r_2)}$. In this sense, we will say that the family $\{\Omega_{\gamma}\}_{\gamma\in (r_1,r_2)}$ has a bifurcation point on $\gamma_*$ if there exist sequences $\{f_n\}_n,\{\gamma_n\}_n$ with $f_n\in \mathcal{H}^1(\Omega_{\gamma_n})$, $\gamma_n\in (r_1,r_2)$ such that:
\begin{itemize}
	\item[(1)] $\lim\limits_{n\rightarrow \infty}\gamma_n=\gamma_*$ and $\lim\limits_{n\rightarrow \infty}g_n=g_{\lambda_*}$ where $g_n:=f_ng$.
	\item[(2)] For all $n$, the metric $g_n$ defined on $\Omega_{\gamma_n}$ determines constant scalar curvature $R(g)$; and makes $\Sigma_1=\{r_1\}\times P$ a minimal hypersurface and $\Sigma_{\gamma}=\{\gamma\}\times P$ a hypersurface with constant mean curvature $H(\gamma)$.
	\item[(3)] For all $n$, $f_n\not\equiv 1$.
\end{itemize} 	
	
\noindent If $\gamma_*$ is {\em not} a bifurcation point, we will say that the family $\{\Omega_{\gamma}\}_{\gamma}$ is locally metrically rigid at $\gamma_*$. 

\smallskip

As it is clear from construction, rigidity and bifurcation of the family $\{\Omega_{\gamma}\}_{\gamma}$ are equivalent to rigidity and bifurcation on the associated family of metrics $\{g_{\gamma}\}_{\gamma}$. Then, Theorem \ref{tege} lead us to

\begin{corolario}\label{rigidwarpedmetric2}
Let $(M,g)$ be a warped product space as in \eqref{wpf} with constant scalar curvature $R(g)$ and being $\Sigma_1:=\{r_1\}\times P$ a minimal hypersurface. Let us consider $\{\Omega_{\gamma}\}_{\gamma\in (r_1,r_2)}$ a family of compact sets defined by $\Omega_{\gamma}=[r_1,\gamma]\times P$. If $R(g),H(\gamma_*)\leq 0$ for some $\gamma_*\in (r_1,r_2)$, being at least one of these inequalities strict, then $\gamma_*$ is a metrically rigid point for the family of sets $\{\Omega_{\gamma}\}_{\gamma}$.
\end{corolario}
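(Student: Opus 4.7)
The plan is to reduce the statement about the family of compact sets $\{\Omega_\gamma\}$ to a rigidity statement about the associated family of metrics $\{g_\gamma\}$ on the fixed slab $\Omega_{r_0}$, and then invoke Theorem \ref{tege} directly.

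First, I would make the reduction explicit. By construction \eqref{defmetric}, the diffeomorphism $\Psi_\gamma \colon \Omega_\gamma \to \Omega_{r_0}$ is an isometry from $(\Omega_\gamma, g)$ to $(\Omega_{r_0}, g_\gamma)$. A sequence $(f_n, \gamma_n)$ realising a bifurcation of the family $\{\Omega_\gamma\}_\gamma$ at $\gamma_*$ (with $f_n \not\equiv 1$, $g_n = f_n g$ satisfying constant $R$ and the appropriate minimality/CMC conditions on the boundary) pushes forward under $\Psi_{\gamma_n}$ to a sequence of $\mathcal{H}^1$-conformal metrics on $\Omega_{r_0}$ witnessing bifurcation of the family $\{g_\gamma\}_\gamma$ at $\gamma_*$ in the sense of Section \ref{sec2.1}, and conversely. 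Thus metric rigidity of $\{\Omega_\gamma\}$ at $\gamma_*$ is equivalent to local rigidity of $\{g_\gamma\}_\gamma$ at $\gamma_*$.

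Second, I would verify the hypotheses of Theorem \ref{tege} for $\{g_\gamma\}_{\gamma \in (r_1,r_2)}$ at $\gamma_*$. Since $(M,g)$ is a warped product with constant scalar curvature, each $(\Omega_\gamma, g)$ (and hence $(\Omega_{r_0}, g_\gamma)$) has constant scalar curvature equal to $R(g)$; the slice $\Sigma_1 = \{r_1\} \times P$ is minimal by hypothesis; and $\Sigma_\gamma = \{\gamma\} \times P$ is a CMC hypersurface with mean curvature $H(\gamma)$ given by \eqref{CMC}. By Proposition \ref{criticalpointch} and Remark \ref{remarkabsecond}, $g_\gamma$ is therefore a critical point of $\biffunm{\lambda(\gamma)}$ restricted to $\mathrm{Conf}_{a(\gamma),b(\gamma)}$, for $a(\gamma), b(\gamma)$ and $\lambda(\gamma)$ determined by $R(g)$, $H(\gamma)$ and \eqref{lambdameanscalar}. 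This is exactly the setting of Theorem \ref{tege}.

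Finally, the sign condition $R(g), H(\gamma_*) \leq 0$ with at least one strict inequality is exactly the assumption $R(g_{\lambda_*}), H((g_{\lambda_*})_{_2}) \leq 0$ of Theorem \ref{tege} (with one strict), so the theorem yields local rigidity of $\{g_\gamma\}_\gamma$ at $\gamma_*$. Combined with the equivalence from the first step, this gives the desired metric rigidity of $\{\Omega_\gamma\}_\gamma$ at $\gamma_*$. There is no real obstacle: the substantive content has been packaged into Theorem \ref{tege} (which in turn rests on the eigenvalue bound of Proposition \ref{lowboundg} applied to the Jacobi operator $J_{g_{\gamma_*}}$), and the only thing to check carefully is that the definition of metric bifurcation for the sets $\Omega_\gamma$ translates faithfully, via the push-forwards $(\Psi_\gamma)_*$, into the definition of bifurcation for the metric family used in Section \ref{sec2.1}.
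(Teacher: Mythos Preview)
Your proposal is correct and follows essentially the same approach as the paper: the paper also deduces the corollary directly from Theorem~\ref{tege}, after observing (just before the statement) that rigidity/bifurcation for the family $\{\Omega_\gamma\}_\gamma$ is equivalent, via the isometries $\Psi_\gamma$, to rigidity/bifurcation for the associated family $\{g_\gamma\}_\gamma$ on $\Omega_{r_0}$, and that each $g_\gamma$ is a critical point of $\biffunm{\lambda(\gamma)}$ restricted to $\mathrm{Conf}_{a(\gamma),b(\gamma)}$. Your write-up simply makes these two preparatory observations more explicit than the paper does.
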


\smallskip

\subsection{Some examples of rigid warped spaces}\label{ADS}

When $(M,g)$ is a warped metric as described in \eqref{wpf}, the scalar curvatures of both $g$ and $g^P$ are related by 

\begin{equation}\label{eq:scalarcurvatures}
R(g)=\frac{(n-1)}{\alpha^2}\left(R(g^P)-\left((n-2)\dot{\alpha}^2 + 2\ddot{\alpha}\alpha\right)\right).
\end{equation}

As it is clear, whenever the warping function $\alpha$ and the scalar curvature of the manifold $P$ $R(g^P)$ are constant, the scalar curvature of the warped space $R(g)$ is also constant. Moreover, all the
slices in such a case are minimal, so $H(\gamma)\equiv 0$. Therefore, if $R(g^P)<0$ by Corollary \ref{rigidwarpedmetric2} we have that the family $\{\Omega_{\gamma}\}_{\gamma}$ is metrically rigid. 

However, we can obtain others non-trivial results. Let us assume that $n=3$, $R(g^P)$ is constant and $R(g)$ is
a non-positive constant. Then, by writing $R(g)=-6E$ for some non-negative constant $E$, we can deduce
\begin{equation}\label{NC}
\dot{\alpha}(r)^2=R(g^P)-\frac{2K}{\alpha(r)}+E\alpha(r)^2,\quad\textrm{whenever $K$ to be a constant}.
\end{equation} 
Indeed, from \eqref{eq:scalarcurvatures} we have that $R(g^P)+3E\alpha^2=\dot{\alpha}^2+2\ddot{\alpha}\alpha$ multiplying by $\dot{\alpha}$ we obtain 
\[\left(R(g^P)+3E\alpha^2\right)\dot{\alpha}=\dot{\alpha}^3+2\ddot{\alpha}\dot{\alpha}\alpha=\stackrel{\small{\bullet}}{(\alpha\dot{\alpha}^2)}\]
and integrating we have
\[R(g^P)\alpha+E\alpha^3=\alpha\dot{\alpha}^2+2K,\]
where $K$ is an integration constant, and \eqref{NC} follows.

Next, taking the variable change 

\begin{equation}\label{eq:cambiovariable}
s\equiv\alpha(r)
\end{equation}
the warped space becomes

\begin{equation}\label{eq:ejemplos}
\left((\widehat{s},s_2)\times P, \frac{1}{R(g^P)-\frac{2K}{s}+Es^2}ds^2+s^2g^P\right)
\end{equation}
where $\widehat{s}\geq 0$ is a zero for $R(g^P)-2K/s+Es^2$ and $s_2\in \mathbb{R}\cup\{\infty\}$. As we can see from \eqref{CMC} and \eqref{eq:cambiovariable}, the mean curvature function $H(\gamma)$ takes the form

\[
H(\gamma)=-2\frac{\sqrt{R(g^P)-2K/\gamma+E\gamma^2}}{\gamma}.
\]

So, it is negative except at $\gamma=\widehat{s}$ where is zero. Thus, the metric space described in \eqref{eq:ejemplos} satisfies that $R(g),H(\gamma)\leq 0$, being the inequality for the mean curvature strict for $\gamma>\widehat{s}$. In conclusion, Corollary \ref{rigidwarpedmetric2} lead us to

\begin{theorem}\label{three-rigid}
Let $(M^3,g)$ be a warped metric as in \eqref{wpf} with fiber of constant scalar curvature. If $R(g)$ is a non-positive constant, $\alpha$ is monotonically increasing and $\dot{\alpha}(r_1)=0$, then the family of compact sets $\{\Omega_{\gamma}\}_{\gamma}$ is metrically rigid.  
\end{theorem}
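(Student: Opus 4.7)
My plan is to derive Theorem~\ref{three-rigid} directly from Corollary~\ref{rigidwarpedmetric2} applied pointwise at every $\gamma_*\in(r_1,r_2)$. The corollary requires, at $\gamma_*$, the three conditions $R(g)\le 0$, $H(\gamma_*)\le 0$, and strictness of at least one of these, so the whole proof reduces to verifying each of these under the theorem's hypotheses.

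The first condition, $R(g)\le 0$, is immediate from the assumption that $R(g)$ is a non-positive constant. For the mean-curvature condition I would invoke formula \eqref{CMC}, which in the present dimension $n=3$ reads $H(\gamma)=-2\dot\alpha(\gamma)/\alpha(\gamma)$. Since $\alpha>0$ and $\alpha$ is monotonically increasing (so $\dot\alpha\ge 0$), this yields $H(\gamma)\le 0$ for every $\gamma\in(r_1,r_2)$, in particular at $\gamma_*$.

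The only delicate point is to ensure strictness of at least one of the two inequalities at every interior $\gamma_*$. If $R(g)<0$ this is automatic. The borderline case is $R(g)=0$: setting $E=0$ in the derivation leading to \eqref{NC}, and using that the fiber has constant scalar curvature $R(g^P)$, the warping function satisfies the first-order ODE $\dot\alpha^2=R(g^P)-2K/\alpha$ with initial condition $\dot\alpha(r_1)=0$. From this I would argue that $\dot\alpha(\gamma_*)>0$ for every $\gamma_*>r_1$: a second zero of $\dot\alpha$ at an interior point would either force $\alpha$ to subsequently decrease (contradicting monotonicity), or force $\alpha$ to be locally constant there, which the monotone-increasing hypothesis rules out. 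Hence $H(\gamma_*)<0$, supplying the required strict inequality.

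The main (and rather mild) obstacle is this strictness argument in the degenerate case $R(g)=0$; everything else is a direct substitution into the framework already set up in Section~\ref{wm} and the explicit calculation \eqref{NC}--\eqref{eq:ejemplos} preceding the statement. Once the three hypotheses of Corollary~\ref{rigidwarpedmetric2} are verified at an arbitrary $\gamma_*\in(r_1,r_2)$, the corollary yields metric rigidity of $\{\Omega_\gamma\}_\gamma$ at $\gamma_*$, and since $\gamma_*$ was arbitrary, the family is metrically rigid in the sense of the theorem.
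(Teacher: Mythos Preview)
Your approach coincides with the paper's: both simply verify the hypotheses of Corollary~\ref{rigidwarpedmetric2} at every $\gamma_*\in(r_1,r_2)$. The only organizational difference is that the paper does not split on the sign of $R(g)$; instead it uses the change of variables $s=\alpha(r)$ and the explicit expression $H(\gamma)=-2\sqrt{R(g^P)-2K/\gamma+E\gamma^2}\,/\,\gamma$ to read off $H(\gamma)<0$ for every interior $\gamma$ (the radicand vanishes only at the left endpoint $\widehat s$), so the required strict inequality always comes from the mean curvature, uniformly in $E\ge 0$.

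One small point: your dichotomy argument in the borderline case $R(g)=0$ is loose as stated --- a zero of $\dot\alpha$ does not, in general, force subsequent decrease or local constancy (think of $r\mapsto r^3$). The clean way to close it, and what the paper's computation implicitly does, is to use the first integral $\dot\alpha^2=R(g^P)-2K/\alpha$: an interior zero $\dot\alpha(\gamma_*)=0$ together with $\dot\alpha(r_1)=0$ forces $\alpha(\gamma_*)=\alpha(r_1)$ (or $\dot\alpha\equiv 0$ if $K=R(g^P)=0$), contradicting that $\alpha$ is increasing.
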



\smallskip

There are several examples with $R(g^P)$ constant. For instance, if we look for $R(g^P)=-1$, we can take $P=\mathbb{H}^2/\Gamma$ where $\mathbb{H}^2$ is the $2$-dimensional hyperbolic space and $\Gamma$ a co-compact group of isometries defined on it. For $R(g^P)=0$, we can take $P=\mathbb{T}^2$ the $2$-dimensional flat torus. The case for $R(g^P)=1$, where $P$ is naturally the $2$-sphere $\mathbb{S}^2$, is specially interesting due to its relation with general relativity. In fact, for $E\geq 0$, the space 

\begin{equation} 
\left((\widehat{s},\infty)\times \mathbb{S}^2,g_{K,E}\right),\qquad g_{K,E}:=\frac{1}{\sqrt{1-\frac{2K}{s}+Es^2}}ds^2+s^2g^{\mathbb{S}^2}
\end{equation}
represents the spatial slice of the Anti-de Sitter Schwarzschild spacetime. The constant $K$ denotes here the mass of the system, and so, it is considered positive. $E$ is the so-called cosmological constant. In the particular case where $E=0$, we obtain the Schwarzschild model.

As we can see in \cite[Chapter 6]{Wa}, the Schwarzschild models appear as a solution of the Einstein equations under some mild hypotheses. On the one hand, it is considered the case of {\em vacuum} Einstein equations, which lead us to the so-called Einstein metrics (metrics whose Ricci tensor is proportional to themselves). On the other hand, it is assumed that the spacetime is static.

The minimal fibre $\Sigma_1=\{\widehat{s}\}\times \mathbb{S}^2$ has a nice interpretation from the viewpoint of relativity. It determines the so-called marginally outer trapped surface (MOTS for short) which represents the boundary of the black hole contained in the model. That is, $\Sigma_1$ is a two-sided surface where the outgoing lightlike rays emanating from it collapse marginally (see \cite{AndMarSim} and the references therein for general background on MOTS).
 
In particular, the previous results and the definition of metric rigidity allow us to deduce that small variations of the metric inside the conformal class, fixing the scalar curvature $R(g_{K,E})$ and making $\Sigma_{1}$ minimal cannot have slices $\{\gamma\}\times \mathbb{S}^2$ with constant mean curvature $H(\gamma)$. In the context of relativity, the results imply that, once it is fixed the causal structure of the model (which is determined by the conformal class of the metric), the spatial fibers of Anti-de Sitter Schwarzschild spacetime are locally unique in the family of Einstein metrics preserving MOTS and making $\{\gamma\}\times \mathbb{S}^2$ a CMC surface with prescribed mean curvature.

\subsection{Warped product manifolds with positive constant scalar curvature}\label{sec5.2}

Notice, in the previous rigidity results, we always require that the mean curvature and the scalar curvature are non-positive. However, if we look at Proposition \ref{conditionwarpedmixed}, the result gives some additional information in cases where the scalar curvature is positive. In this last section, we will show a simple method to obtain both rigidity and bifurcation results, when the mean curvature is negative but the scalar curvature is positive.

\smallskip 

Let us consider $(M,g)$ a warped metric space as in \eqref{wpf} and assume that $R(g)$ is a positive constant. We will also assume that the warping function $\alpha$ is non-decreasing (i.e. $\dot{\alpha}\geq 0$), being zero in $r_1$. In particular, and recalling the expression for $H(\gamma)$ in \eqref{CMC}, the mean curvature of the slice $\{\gamma\}\times P$ is nonpositive for $\gamma\in (r_1,r_2)$ and zero in $\gamma=r_1$. Our aim is to study the rigid character of the family of compact sets $\{\Omega_{\gamma}\}_\gamma$ and, to do so, we have to study for each $\gamma$ the spectrum of the mixed eigenvalue problem \eqref{bifvariation} with $M=\Omega_{\gamma}$. 

Let us recall two important facts. First, that the eigenfunctions $f$ of previous mixed problem should belong to the space $\mathcal{H}^1_{a,b}(\Omega_{\gamma})$, where $a$ and $b$ are determined by $R(g)$ and $H(\gamma)$ (Remark \ref{remarkabsecond}). Secondly, and recalling Theorem \ref{teomixedproblemw}, that any eigenfunction $f$ of the problem can be written as the product of two functions $f_{P}$ and $\fa$, where the former is an eigenfunction of the Laplacian on $(P,g^P)$ and the latter is an eigenfunction of the following Sturm-Liouville problem 

\begin{equation}\label{sturmliuwarped}
\begin{array}{rl}
-\dfrac{1}{\alpha^{n-1}}\partial_r\left(\alpha^{n-1}\partial_r\fa\right)+\dfrac{1}{\alpha^{2}}\mu\fa=& \left(R(g) + \overline{\mu}\right)\fa,\\\\
-\dot{\fa}(r_1)=&\overline{\mu}\fa(r_1),\\\\
\dot{\fa}(\gamma)=&\left(H(\gamma)+\overline{\mu}\right)\fa(\gamma).
\end{array}
\end{equation}
Here, $\mu$ denotes the eigenvalue associated to $f_{P}$. The eigenvalues of previous Sturm-Liouville problem will be denoted, in accordance with Section \ref{mixedwarpedsection}, as $\{\overline{\mu}_j^i(\gamma)\}$. 

As $f=\fa f_{P}\in \mathcal{H}^1_{a,b}(\Omega_{\gamma})$, we deduce that:
\begin{equation}\label{conditionadmissible}
\left(\frac{na}{2}\int_{r_1}^\gamma\alpha^{n-1}(r)\fa(r)\,dr + \frac{(n-1)b}{2}\fa(\gamma)\alpha^{n-1}(\gamma)\right)\int_{P}f_{P}d\sigma_P=0,
\end{equation}
where $d\sigma_{P}$ denotes the volume form on $(P,g^{P})$. The second integral of previous expression is zero if and only if $f_{P}$ is an eigenfunction associated to $\mu_i$ with $i\neq 0$. When $i=0$, a restriction on the function $\fa$ appears. Therefore, any eigenvalue for \eqref{sturmliuwarped} with $\mu=\mu_i$ and $i\geq 1$ is admissible in the sense described in Remark \ref{remarkadmissible}. However, when $\mu=0$, we have to check additionally that the associated eigenfunction $\fa$ satisfies an integral condition derived from \eqref{conditionadmissible}. 

\smallskip 

Let us assume that $\alpha$ is bounded from above. Then, taking into account that $\displaystyle{\rm lim}_{i\rightarrow\infty}\mu_i=\infty$ and $\alpha$ is non-decreasing, we can find $i_0$ such that for $i> i_0$, 

\[\mu_i \alpha(r)^{-2}\geq \mu_i\alpha(r_2)^{-2} > R(g)\] 
(here, we denote by $\alpha(r_2)={\rm lim}_{r\rightarrow r_2}\alpha(r)$). So, as $H(\gamma)\leq 0$, Proposition \ref{conditionwarpedmixed} ensures that all the eigenvalues $\{\overline{\mu}_{j}^i(\gamma)\}$ with $i> i_0$ are positive. Therefore, we only have to study when $\{\overline{\mu}_j^i(\gamma)\}$ is zero for $0\leq i\leq i_0$. This study can be performed by using numerical approximations in the following way: For each $i=0,1,\dots,i_0$ and any $\gamma\in (r_1,r_2)$, consider the differential equation \eqref{sturmliuwarped} with $\overline{\mu}=0$ and initial conditions\footnote{Observe that $\fa(\gamma)$ cannot be zero as, otherwise, $\dot{\fa}(\gamma)=0$ and so $\fa$ is constantly zero.} $\fa(\gamma)=1$ and $\dot{\fa}(\gamma)=H(\gamma)$. Then, two possibilities arise:
 \begin{itemize}
 	\item[(i)] $\dot{\fa}(r_1)\neq 0$, and then, $\fa$ does not satisfy the initial condition in \eqref{sturmliuwarped} with $\overline{\mu}=0$. In particular, we deduce that zero is not an eigenvalue in this case. 
 	
 	\item[(ii)] $\dot{\fa}(r_1)=0$, and so, zero is an eigenvalue for the mixed problem. In this case, and using the continuous dependence of the eigenvalues regarding $\gamma$, we can make a local numerical analysis to determine if such an eigenvalue changes its sign around $\gamma$. 
 \end{itemize}
 
 If for all $0\leq i\leq i_0$ we are in the first case, we have that $\gamma$ is a (local) rigid point. Otherwise, we have to analyze the cases where the zero eigenvalue appears in order to determine if the Morse index varies, and so, if $\gamma$ is a bifurcation point (recall Remark \ref{remarkbifurcation} about bifurcation results). In this second case, we have to take special attention when $i=0$ in order to ensure that the eigenvalue is admissible.

\section{Appendix}\label{appendix}
In this subsection we will include all the basic computations needed in order to compute the first and second variation of the functional $\biffunm{\lambda}$. Such computations have been already appeared elsewhere (see for instance \cite{Levi,A}), but we will include them here for the sake of completeness.

Even if we are interested only in conformal variations, the computation of the first variation of the different elements conforming $\biffunm{\lambda}$ will be done for a more general family of variations.
Let $S^k(M)$ the space of all symmetric $(0,2)$ tensors of class $C^k(M)$ with $k\geq2$. We consider a metric variation $g:\cone\times(-\epsilon,\epsilon)\rightarrow \mathbb{M}$ defined by:

\[
g(h,t)=g+th
\]
where $\cone$ is the open cone of $S^k(M)$ consisting of all Riemannian metrics on $M$ such that for all $g\in\cone$ the tangent space $T_g\cone$ is identified with the Banach space $S^k(M)$.
By compactness, for $\vert t\vert$ sufficiently small, $g(h,t)$ is in $\cone$.
\begin{convention}
Henceforth, all the elements associated to the metric $g(h,t)$ will be denoted as functions of $t$, assuming that the metric variation $h$ is fixed from the beginning. So, elements as the metric itself, the scalar and mean curvature (among others) will be denoted by $g(t), \scal{t}$ and $\mean{t}$ respectively.
\end{convention}

\smallskip

For the first variation of $\Cab$, just recall that

\begin{align}\label{dv}
\delta\,\fv{t}=\frac{1}{2}g^{ij}h_{ij}\,\fv{g},
\end{align}
and so

\begin{equation}\label{cabfirstdev}
\delta\,\Cab(t)=\frac{a}{2}\int_{M} g^{ij}h_{ij}\;\fv{g} + \frac{b}{2}\int_{\bord{2}} g^{\alpha\beta}h_{\alpha\beta}\;\fvr{g_2}.
\end{equation}
Now, we will focus on the first variation of the GHY-functional (recall the definition on \eqref{GHY}). From basic computations, we have that

\begin{equation}\label{ghyfirst1}
\def\arraystretch{2.2}
\begin{array}{r>{\displaystyle}l}
\delta\,\ghy(t)= & \int_{M} \delta\,\left(\scal{t}\fv{t}\right) + \int_{\partial M} \delta\,\left(\mean{t}\fvr{t}\right)\\ = & \int_{M} (\delta\,\scal{t})\fvr{g} + \scal{g}(\delta\,\fv{t})+\\ &+\int_{\partial M} (\delta\;\mean{t})\fvr{g} + \mean{g}(\delta\, \fvr{t}) \end{array}
\end{equation}

and so, we need to compute the first variation of both the scalar curvature and the mean curvature. For the first one, let us consider a point $p\in M$ and a local normal coordinate system $(x_i)$ centred on $p$. Recall that in such coordinates, at the point $p$ we have:

\[
g_{ij}=\delta_{ij},\quad \partial_{k}g_{ij}=0\quad\hbox{and}\quad \nabla_i=\partial_i
\]
for $1\leq i,j,k\leq n$, where $\delta_{ij}$ represents the Kronecker delta, that is, $\delta_{ij}=1$ if $i=j$ and $\delta_{ij}=0$ if $i\neq j$.  In particular, the variation of the Christoffel symbols becomes\footnote{Observe that the variation of the Levi-Civita connection $\delta \nabla$ is a tensor (in spite of what happens with $\nabla$), and so, a special coordinate system can be considered for its computation.}

\begin{equation}\label{varchris}
\delta\, \Gamma_{ij}^k(t)=\frac{1}{2}g^{kl}\left(\nabla_jh_{il}+\nabla_ih_{jl}-\nabla_lh_{ij}\right).
\end{equation}
 From the definition of the scalar curvature, we have that:
 \[
 \delta\, \scal{t}=(\delta g^{ij}(t))R_{ij} + g^{ij}(\delta\,\tricc{t}{ij}).
 \]
 In one hand, one easily obtain that
\[
 \delta\, g_{ij}(t)=h_{ij}\quad\textrm{and} \quad\delta g^{ij}(t)=-h^{ij}.
\]
On the other hand, the variation of the Ricci tensor has the following expression
\[\def\arraystretch{1.5}\begin{array}{rl}
\delta\,\tricc{t}{ij} = & \delta\, \nabla_l\, \Gamma_{ij}^l(t)-\delta\, \nabla_i\,\Gamma_{jl}^l(t)+\delta\,\left(\Gamma_{ij}^u\Gamma_{lu}^l-\,\Gamma_{lj}^u\,\Gamma_{iu}^l\right)(t)\\ = & \nabla_l\, \delta\Gamma_{ij}^l(t)-\nabla_i\,\delta \Gamma_{jl}^l(t)\\ = &  \frac{1}{2}g^{lm}\left(\nabla_l\nabla_jh_{im}+\nabla_l\nabla_ih_{jm}-\nabla_l\nabla_mh_{ij}-\nabla_i\nabla_jh_{lm}\right)\\  = &\nabla_{i}\left( \nabla_{j}h^{ij}-\nabla_{i}h^l_{l}\right)
\end{array}
\]
where we have used that $\delta\,\left(\Gamma_{ij}^u\Gamma_{lu}^l-\,\Gamma_{lj}^u\,\Gamma_{iu}^l\right)(t)=0$, as the Christoffel symbols vanish for $t=0$, the formulae (\ref{varchris}) and the fact that $g^{lm}\left(\nabla_lh_{jm}-\nabla_mh_{jl}\right)=0$ for $1\leq l,m\leq n$. In particular, the variation of the scalar curvature takes the following form:

\begin{equation}\label{scalarfirst}
\delta\,\scal{t}=-h^{ij}R_{ij} + \nabla_{i}\left( \nabla_{j}h^{ij}-\nabla_{i}h^l_{l}\right).
\end{equation}
Summarizing, the first two elements on the right of \eqref{ghyfirst1} bearing in mind \eqref{dv} and  \eqref{scalarfirst}, we obtain:
\[\def\arraystretch{2.2}\begin{array}{rl}
\displaystyle\int_{M} (\delta\,\scal{t})\fv{g} + \scal{g}(\delta\,\fv{t}) = & \displaystyle\int_M\left(-h^{ij}R_{ij}+\frac{1}{2}g^{ij}h_{ij}\scal{g}\right)\fv{g}\\ & +\displaystyle\int_{M}\nabla_i\left(\nabla_jh^{ij}-\nabla^ih_{m}^{m}\right)\fv{g} \end{array}
\]
or, by using the Divergence theorem,
\begin{equation}\label{firstvariationscalar}\def\arraystretch{2.2}\begin{array}{r>{\displaystyle}l}
\delta\, \displaystyle\int_{M} \scal{t}\fv{t}= &  -\int_M\left(h^{ij}R_{ij}-\frac{1}{2}g^{ij}h_{ij}\scal{g}\right)\fv{g}\\ = & -\int_{\partial M}N^i\left(\nabla^jh_{ij}-\nabla_ih_{l}^{l}\right)\fvr{g}
\end{array}
\end{equation}
where $N$ denotes the inward normal vector to the hypersurface $\partial M$.

Next, we need to compute the second term in \eqref{ghyfirst1}. As the point $p\in \partial M$, let us consider a different coordinate system. Take $(x_1,\dots,x_{n-1})$ a normal coordinate system associated to the boundary $\partial M$, endowed with the metric induced by $g$, and denote by $\gamma_{(x_1,\dots,x_{n})}$ the geodesic starting at $(x_1,\dots,x_{n-1})$ with direction $N$. Observe that $(x_1,\dots,x_n):=\gamma_{(x_1,\dots,x_{n-1})}(x_n)$ defines a coordinate system for $M$ around $p$ in which the metric $g$ takes the following form:
\[g=dx^2_{n}+g_{\alpha\beta}dx^{\alpha}dx^{\beta}.
\]
Moreover, as $(x_{\alpha})$ is a normal coordinate system for $\partial M$ around $p$ and $N=\partial_{n}$,

\[
\Gamma_{\alpha\beta}^n=\II_{\alpha\beta}.
\]
%

\smallskip

Now, we are ready to compute the first variation for the mean curvature. Recall that, from definition,
\[
\mean{t}=g^{\alpha\beta}(t)\II_{\alpha\beta}(t),
\]
and so,
\begin{equation}\label{meancurvature1}
\delta\, \mean{t}=(\delta\, g^{\alpha\beta}(t))\II_{\alpha\beta} + g^{\alpha\beta}(\delta\, \second{t}{\alpha\beta}).
\end{equation}
For the variation of the second fundamental form, define $\nu:\partial M\times (-\epsilon,\epsilon)\rightarrow (T \partial M)^{\bot}$ an unitary inward normal vector of the tangent space for the metric $g(t)$. In particular, it is not restrictive to assume that, for $\nu(0)=N=\partial_{n}$. Then,

\begin{equation}\label{secondvar1}\def\arraystretch{1.5}\begin{array}{rl}
\delta\,\second{t}{\alpha\beta}= &
\delta\, g_{ij}(t)\nu^i(t)\Gamma_{\alpha\beta}^{j}\\
= & h_{nn}\Gamma_{\alpha\beta}^{n} + (\delta \nu(t))^n\Gamma_{\alpha\beta}^n + \delta \Gamma_{\alpha\beta}^n.
\end{array}
\end{equation}
Now, taking into account that $g_{ij}(t)\nu^i(t)\nu^j(t)=1$ and $g_{i\alpha}(t)\nu^i(t)=0$ for all $t$, we obtain that:
\begin{align*}\label{varnu}
(\delta \nu(t))^n=-\frac{1}{2}h_{nn},
\end{align*}
which, together with \eqref{varchris}, yields the following expression for \eqref{secondvar1}:
\[\begin{array}{rl}
\delta \second{t}{\alpha\beta}= & \frac{1}{2}h^{nn}\Gamma_{\alpha\beta}^n+\frac{1}{2}\left(\nabla_\alpha h_{\beta n}+\nabla_{\beta}h_{\alpha n}-\nabla_nh_{\alpha\beta}\right).
\end{array}\]
Hence, \eqref{meancurvature1} becomes
\begin{equation}\label{firstmean}
\def\arraystretch{2.2}
\begin{array}{r>{\displaystyle}l}
\delta\mean{t}= &-h^{\alpha\beta}\II_{\alpha\beta} + \frac{1}{2}h^{nn}g^{\alpha\beta}\Gamma_{\alpha\beta}^n+ \frac{1}{2}g^{\alpha\beta}\left(\nabla_\alpha h_{\beta n}+\nabla_{\beta}h_{\alpha n}-\nabla_nh_{\alpha\beta} \right)\\
= & -h^{\alpha\beta}\II_{\alpha\beta}+\frac{1}{2}h^{nn}\mean{g}+\nabla^{\alpha}h_{\alpha n}-\frac{1}{2}\nabla_{n}h_{\alpha}^{\alpha},
\end{array}\end{equation}
 and so, using \eqref{firstmean} and the fact that $\delta \fvr{t}=(1/2)g^{\alpha\beta}h_{\alpha\beta}\fvr{g}$ we get
{\small \begin{align*}
 \delta\!\int_{\partial M}\! \mean{t}\fvr{t}\!= & \int_{\partial M}\! \left(-h^{\alpha\beta}\II_{\alpha\beta}+\frac{1}{2}h^{nn}\mean{g}+\nabla^{\alpha}h_{\alpha n}-\frac{1}{2}\nabla_{n}h_{\alpha}^{\alpha}+\frac{1}{2}\mean{g} g^{\alpha\beta}h_{\alpha\beta}\!\right)\fvr{g}\nonumber\\
 = & \int_{\partial M} \left[\left(-\II_{\alpha\beta}-\frac{1}{2}\mean{g} g_{\alpha\beta}\right)h^{\alpha\beta}+ \frac{1}{2}h^{nn}\mean{g}\right]\, \fvr{g}\\
  & +\int_{\partial M}\left( \nabla^{\alpha}h_{\alpha n} - \frac{1}{2}\nabla_{n}h_{\alpha}^{\alpha} \right) \fvr{g}.\nonumber
 \end{align*}}
Then, using this identity, (\ref{firstvariationscalar}) and the fact that in our coordinates $\nu=\partial_n$, we deduce that:
\[\def\arraystretch{2.2}\begin{array}{r>{\displaystyle}l}
\delta\,\ghy = & \delta \int_{M} \scal{t}\fv{t} + 2 \delta\int_{\partial M} \mean{t}\fvr{t}  \\
 = & -\int_M\left(h^{ij}R_{ij}-\frac{1}{2}h^{ij}g_{ij} \scal{g}\right)\fv{g} \\
  & + \int_{\partial M} \left[-\left(2\II_{\alpha\beta}-\mean{g} g_{\alpha\beta}\right)h^{\alpha\beta}+ h^{nn}\mean{g}\,\right]\fvr{g} + \int_{\partial M} \left(\nabla^{\alpha} h_{\alpha n}\right)\,\fvr{g}.
\end{array}
\]
Finally, taking into account that,
\[
\nabla^{\alpha} h_{\alpha n}=D_{\alpha}h_{n}^\alpha + h^{\alpha\beta}\II_{\alpha\beta}-h^{nn}\mean{g},
\]
where $D$ denotes the induced connection on $\partial M$, and the fact that
\[
\int_{\partial M} D_{\alpha}h_{n}^{\alpha}\,\fvr{g}=0
\]
from the Divergence theorem (recall that $\partial M$ is closed), we finally obtain the following expression for the first variation of the GHY-functional
\begin{equation}\label{secondghy}
\def\arraystretch{2.2}
\begin{array}{r>{\displaystyle}l}

\delta \ghy_{g}(h) = & -\int_M\left(R_{ij}-\frac{1}{2}g_{ij}\scal{g}\right)h^{ij} \fv{g}\\
 & - \int_{\partial M} \left(\II_{\alpha\beta}-\mean{g} g_{\alpha\beta}\right)h^{\alpha\beta}\fvr{g}.\\
\end{array}
\end{equation}

\smallskip

Now, we will focus on the computation of the second variation. For this case, and for simplicity, only conformal variations will be considered (for the general case, see \cite{A}). So, let us assume now that $h=fg$ for a positive function $f\in\mathcal{H}^1(M) $. With this assumption, observe that (\ref{cabfirstdev}) and (\ref{ghyfirst1}) become
\begin{align}\label{firstcabfg}
\delta\,\Cab(fg)= \frac{an}{2}\int_{M}f\fv{g} + \frac{b(n-1)}{2}\int_{\bord{2}}f\fvr{g_2},
\end{align}
and
\begin{align}\label{firstFfg}
\delta\, \ghy(fg) = & \frac{n-2}{2}\int_{M} \scal{g}\,f\fv{g}+ (n-2)\int_{\partial M} \mean{g}\,f \fvr{g},
\end{align}
respectively. Therefore, the second variation of the former is given by
\begin{equation}\label{cabsecond}
\delta^2\, (\Cab)_{g}(fg)= \frac{n^2\,a}{4}\int_{M} f^2 \fv{g} + \frac{(n-1)^2\,b}{4}\int_{\bord{2}} f^2 \fvr{g_2},
\end{equation}
where we have used that
\[
\delta\,\fv{fg}=\frac{n}{2}f \fv{g},\qquad\delta\fvr{fg}=\frac{n-1}{2}f\fvr{g_2}.
\]
For the second variation of the latter, observe that both (\ref{scalarfirst}) and (\ref{firstmean}) become
\[
\delta \scal{fg}=-f\scal{g} + (n-1)\Delta_g{f}, \qquad \delta \mean{fg}=-\frac{1}{2}f\mean{g}- \frac{n-1}{2}\partial_{n}f,
\]
where $\Delta_g=-\mathrm{div}_g(\grad)$ is the Laplace-Beltrami operator.

Finally, the second variation of the functional $F$ for conformal variations has the following form
\begin{equation}\label{secondvariationghy}
\def\arraystretch{2.2}
\begin{array}{r>{\displaystyle}l}
\delta^2 (\ghy)_{g} (fg)= & \frac{n-2}{2}\int_{M}f\left(\scal{g}\left(\frac{n-2}{2}\right)f + (n-1)\Delta_g{f}\right)\fv{g}\\ & + (n-2)\int_{\partial M}f\left( \left(\frac{n-2}{2}\right) \mean{g}f-\left(\frac{n-1}{2}\right)\partial_{n}f\right)\fvr{g}.
\end{array}
\end{equation}
%
%
%
%
\section{Acknowledgements}
The first author was partially supported by FAPESP (Fundac\~ao de Amparo \`{a} Pesquisa do Estado de S\~ao Paulo, Brazil) Process 2012/22490-7 and by MINECO-FEDER project MTM2012-34037 and Fundaci\'{o}n S\'{e}neca
project 04540/GERM/06, Spain. This research is a result of the activity developed within the framework of the Programme in Support of Excellence Groups of the Regi\'{o}n de Murcia, Spain, by Fundaci\'{o}n S\'{e}neca, Regional Agency for Science and Technology (Regional Plan for Science and Technology 2007-2010).

The second author was supported by FAPESP (Funda\c{c}\~ao de Amparo \'{a} Pesquisa do Estado de S\~ao Paulo, Brazil) Process 2012/11950-7 and the Spanish Grants MTM2013-47828-C2-1-P and MTM2013-47828-C2-2-P (MINECO AND FEDER funds).
\bibliographystyle{amsplain}

\end{document}